\documentclass[a4paper,11pt]{amsart}

\usepackage{a4wide}
\usepackage{tikz}
\usepackage{cite}

\newcommand{\T}{\mathcal{T}}

\newif\ifdetails
\detailstrue
\newcommand{\DETAIL}[1]%
{\ifdetails\par\fbox{\begin{minipage}{0.9\linewidth}\emph{Detail:}
      #1\end{minipage}}\par\fi}
\newcommand{\TODO}[1]%
{\ifdetails\par\fbox{\begin{minipage}{0.9\linewidth}\textbf{TODO:}
      #1\end{minipage}}\par\fi}

\usepackage{makecell}
\usepackage{relsize}

\newtheorem{lemma}{Lemma}
\newtheorem{proposition}[lemma]{Proposition}
\newtheorem{theorem}[lemma]{Theorem}
\newtheorem{corollary}[lemma]{Corollary}
\theoremstyle{remark}

\newtheorem*{conjecture}{Conjecture}

\usepackage{caption}
\usepackage{subcaption}
\captionsetup[subfigure]{labelformat=empty}

%


\usepackage{float} 
\usepackage[pdftex,a4paper,
citecolor = blue, colorlinks=true,urlcolor=blue]{hyperref}
\urlstyle{same}

\newcommand\Tstrut{\rule{0pt}{2.6ex}}         
\newcommand\Bstrut{\rule[-0.9ex]{0pt}{0pt}}  
\newcommand{\old}[1]{{}}

\title{Further results on the inducibility of $d$-ary trees}

\author{Audace A. V. Dossou-Olory}
\author{Stephan Wagner}
\thanks{The first author was supported by Stellenbosch University in association with African Institute for Mathematical Sciences (AIMS) South Africa; the second author was supported by the National Research Foundation of South Africa, grant number 96236.}
\address{Audace A. V. Dossou-Olory and Stephan Wagner\\ Department of Mathematical Sciences \\ Stellenbosch University \\ Private Bag X1, Matieland 7602 \\ South Africa}
\email{audace@aims.ac.za, swagner@sun.ac.za}
\subjclass[2010]{Primary 05C05; secondary 05C35, 05C60}
\keywords{inducibility, $d$-ary trees, leaf-induced subtrees, maximum density, even trees}

\begin{document}

\setlength{\abovedisplayskip}{3pt}
\setlength{\belowdisplayskip}{3pt}

\begin{abstract}
A subset of leaves of a rooted tree induces a new tree in a natural way. The density of a tree $D$ inside a larger tree $T$ is the proportion of such leaf-induced subtrees in $T$ that are isomorphic to $D$ among all those with the same number of leaves as $D$. The inducibility of $D$ measures how large this density can be as the size of $T$ tends to infinity. In this paper, we explicitly determine the inducibility in some previously unknown cases and find general upper and lower bounds, in particular in the case where $D$ is balanced, i.e., when its branches have at least almost the same size. Moreover, we prove a result on the speed of convergence of the maximum density of $D$ in strictly $d$-ary trees $T$ (trees where every internal vertex has precisely $d$ children) of a given size $n$ to the inducibility as $n \to \infty$, which supports an open conjecture.
\end{abstract}

\maketitle

\section{Introduction and statement of results}\label{Sec:intro}

The inducibility is a recently introduced invariant that captures how often a fixed rooted tree can occur ``inside'' a large rooted tree. For its formal definition, let us start with some fundamental terminology and notation. By a \emph{$d$-ary tree}, we mean a rooted tree whose internal (non-leaf) vertices all have at least two and at most $d$ children. In the special cases $d=2$ and $d=3$, we speak of \emph{binary} and \emph{ternary} trees, respectively. A rooted tree is called \emph{strictly $d$-ary} if every internal vertex has exactly $d$ children. For our purposes, it is natural to measure the size of a rooted tree $T$ by the number of leaves, which we denote by $\|T\|$. A subset $S$ of leaves of a $d$-ary tree induces another $d$-ary tree in a natural way: we first take the smallest subtree of $T$ that contains all the leaves in $S$, and then repeatedly suppress all vertices with only one child by contracting the two adjacent edges to a single edge, until no vertex with a single child remains. The procedure is illustrated in Figure~\ref{leaf-induced}. The resulting tree is called a \emph{leaf-induced subtree}. Its root is precisely the most recent common ancestor of the leaves in $S$. A \emph{copy} of a fixed $d$-ary tree $D$ is any leaf-induced subtree that is isomorphic to $D$; we denote the number of distinct copies of $D$ in $T$ by $c(D,T)$. In other words, $c(D,T)$ is the number of sets of $\|D\|$ leaves of $T$ that induce a tree isomorphic to $D$. A normalised version of this quantity is the \emph{density} $\gamma(D,T)$, which is defined as
$$\gamma(D,T) = \frac{c(D,T)}{\binom{\|T\|}{\|D\|}}\,.$$
It can be seen as the probability that a randomly chosen leaf subset of $\|T\|$ induces a subtree isomorphic to $D$, and therefore always lies between $0$ and $1$. Now we finally define the $d$-ary \emph{inducibility} of a fixed $d$-ary tree $D$ as the limit superior of the density as the size of $T$ tends to infinity:
$$I_d(D) = \limsup_{\substack{\|T\| \to \infty \\ T\ d\text{-ary tree}}} \gamma(D,T) = \limsup_{n \to \infty} \max_{\substack{\|T\| = n \\ T\ d\text{-ary tree}}} \gamma(D,T)\,.$$
It is a nontrivial fact, proven in \cite{AudaceStephanPaper1}, that one can replace $\limsup$ by an ordinary limit in the second expression:
\begin{equation}\label{eq:limit}
I_d(D) = \lim_{n \to \infty} \max_{\substack{\|T\| = n \\ T\ d\text{-ary tree}}} \gamma(D,T)\,.
\end{equation}
Moreover, it is possible to restrict $T$ to strictly $d$-ary trees:
$$I_d(D) = \lim_{n \to \infty} \max_{\substack{\|T\| = (d-1)n+1 \\ T \text{ strictly } d\text{-ary tree}}} \gamma(D,T)\,.$$
Note here that $\|T\| \equiv 1 \mod (d-1)$ for every strictly $d$-ary tree $T$ (which is well known and easy to show), hence the restriction to values of the form $(d-1)n+1$.

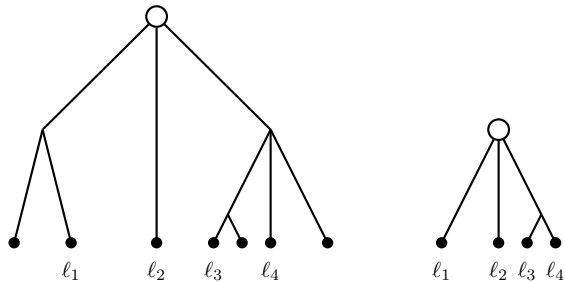
\begin{figure}[!h]\centering  

\begin{tikzpicture}[thick,scale=0.75, transform shape]
\node [circle,draw] (r) at (0,0) {};

\draw (r) -- (-2,-2);
\draw (r) -- (0,-4);
\draw (r) -- (2,-2);
\draw (-2,-2) -- (-2.5,-4);
\draw (-2,-2) -- (-1.5,-4);
\draw (2,-2) -- (1,-4);
\draw (2,-2) -- (2,-4);
\draw (2,-2) -- (3,-4);
\draw (1.25,-3.5) -- (1.5,-4);

\node [fill,circle, inner sep = 2pt ] at (-2.5,-4) {};
\node [fill,circle, inner sep = 2pt ] at (-1.5,-4) {};
\node [fill,circle, inner sep = 2pt ] at (0,-4) {};
\node [fill,circle, inner sep = 2pt ] at (1,-4) {};
\node [fill,circle, inner sep = 2pt ] at (1.5,-4) {};
\node [fill,circle, inner sep = 2pt ] at (2,-4) {};
\node [fill,circle, inner sep = 2pt ] at (3,-4) {};

\node at (-1.5,-4.5) {$\ell_1$};
\node at (0,-4.5) {$\ell_2$};
\node at (1,-4.5) {$\ell_3$};
\node at (2,-4.5) {$\ell_4$};

\node [circle,draw] (r1) at (6,-2) {};

\draw (r1) -- (5,-4);
\draw (r1) -- (6,-4);
\draw (r1) -- (7,-4);
\draw (6.75,-3.5)--(6.5,-4);

\node [fill,circle, inner sep = 2pt ] at (5,-4) {};
\node [fill,circle, inner sep = 2pt ] at (6,-4) {};
\node [fill,circle, inner sep = 2pt ] at (6.5,-4) {};
\node [fill,circle, inner sep = 2pt ] at (7,-4) {};

\node at (5,-4.5) {$\ell_1$};
\node at (6,-4.5) {$\ell_2$};
\node at (6.5,-4.5) {$\ell_3$};
\node at (7,-4.5) {$\ell_4$};
\end{tikzpicture}
\caption{A ternary tree (left) and the subtree induced by four leaves $\{\ell_1,\ell_2,\ell_3,\ell_4\}$ (right).}\label{leaf-induced}
\end{figure}

The definition described above parallels the notion of inducibility of graphs, which is defined in an analogous way (``copies'' being isomorphic embeddings). Its investigation began with a paper by Pippenger and Golumbic \cite{pippenger1975inducibility}, and there is a substantial amount of literature on this parameter (see \cite{balogh2016maximum,hatami2014inducibility,hirst2014inducibility,even2015note} for some recent examples).

Inducibility in the context of trees, on the other hand, was first mentioned by Bubeck and Linial in 2016 \cite{bubeck2016local}, though ``copies'' were also defined as isomorphic embeddings there, and the proportion was taken among subtrees of the same size, which means that the denominator depends on the whole tree rather than just its size.

\medskip

The definition in terms of leaf-induced subtrees goes back to a recent article by Czabarka, Sz{\'e}kely and the second author of this paper \cite{czabarka2016inducibility}, originally only in the binary case. It was motivated by a concrete question from phylogenetics involving a structure known as a \emph{tanglegram} that consists of two binary trees whose leaves are connected by a perfect matching. 

\medskip

Not a lot is known about the $d$-ary inducibility; in particular, its exact value has only been determined in very few cases. Even though the limit in \eqref{eq:limit} can be shown to exist, it is not easy to evaluate. It is therefore desirable to have ways to bound the inducibility from above or below. The maximum value of $I_d(D)$ is equal to $1$, and it is attained precisely when $D$ is a \emph{binary caterpillar}, i.e., a binary tree whose internal vertices form a path rooted at one of its ends (see ~\cite{AudaceStephanPaper1}). Note that this is trivial if $\|D\| = 1$ or $\|D\| = 2$, since there are no other trees of the same size in these cases. On the other hand, the following lower bound was shown in~\cite{AudaceStephanPaper1} to hold for all $d$-ary trees with $k > 1$ leaves:
\begin{align*}
I_d(D)\geq \frac{(k-1)!}{k^{k-1}-1}\,,
\end{align*}
with equality for the star when $k=d$. In particular, the $d$-ary inducibility is always positive. The minimum of $I_d(D)$ (given $\|D\|$) is not known in general, though. In this context, it is worth mentioning that the quantity
\begin{align*}
\liminf_{\substack{\|T\|\to \infty \\ T~\text{$d$-ary tree}}} \gamma(D,T)\,,
\end{align*}
which is the minimum analogue of the inducibility, is much better understood. Specifically, it was shown in~\cite{MinDensity} that this quantity is always equal to $0$ unless $D$ is a binary caterpillar, in which case an explicit formula can be given.

\medskip

While the limit in~\eqref{eq:limit} is difficult to evaluate, it can be used to approximate $I_d(D)$; this method was applied in~\cite{SmallTrees} to two concrete examples. For this purpose, information on the speed of convergence is crucial. It was shown in~\cite{AudaceStephanPaper1} that
\begin{equation}\label{eq:arb_asymp}
I_d(D) \leq \max_{\substack{\|T\| = n \\ T\ d\text{-ary tree}}} \gamma(D,T) \leq I_d(D) + \frac{\|D\|(\|D\|-1)}{n}\,,
\end{equation}
so the sequence of maximum densities converges to the limit with an error term of order at most $\mathcal{O}(n^{-1})$. There are concrete examples (see~\cite{czabarka2016inducibility}) showing that the order of magnitude of this error term cannot be improved in general. In the case where only strictly $d$-ary trees are considered, it was shown in~\cite{AudaceStephanPaper1} that
\begin{equation}\label{eq:strict_asymp}
\max_{\substack{\|T\| = (d-1)n+1 \\ T \text{ strictly } d\text{-ary tree}}} \gamma(D,T) = I_d(D) + \mathcal{O}(n^{-1/2})\,,
\end{equation}
but it is an open question whether this error term can be improved.

\medskip

This paper contributes to all the aforementioned questions. In particular, we determine the precise value of the inducibility in some new cases. Among the trees to which our method applies are what we call \emph{even} $d$-ary trees (based on the property that the leaves in these trees are ``evenly distributed''). This extends previous results in the binary case \cite{czabarka2016inducibility}. 
The even $d$-ary tree with $k$ leaves, denoted $E_k^d$, is defined recursively as follows: 
\begin{itemize}
\item For $k \leq d$, $E_k^d$ is a star, consisting only of the root and $k$ leaves.
\item If $k > d$, we express it as $k = ds + b$, with $b \in \{0,1,\ldots,d-1\}$. Take $d-b$ copies of $E_s^d$ and $b$ copies of $E_{s+1}^d$, and connect a new common root to each of their roots by an edge to obtain $E_k^d$.
\end{itemize}
Figure~\ref{Someeventernarytrees} shows the even ternary trees with up to nine leaves.
We have the following theorem:

\begin{theorem}\label{IndEvendAryTree}
Let the constants $c_k$ be defined recursively by $c_0 = c_1 = 1$, and by
$$c_{ds+b}=\binom{d}{b} \frac{c_s^{d-b}\cdot c_{s+1}^b}{d^{ds+b}-d}$$
for every $s\geq 0$ and every $ b \in\{0,1,\ldots, d-1\}$.
The $d$-ary inducibility of the even tree $E_k^d$ is given by $I_d(E_k^d) = k! c_k$.
\end{theorem}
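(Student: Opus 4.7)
The strategy is to work with the normalized count $\tilde{F}_k(T) := c(E_k^d,T)/\|T\|^k$, so that the theorem is equivalent to showing $\lim_{n\to\infty}\max_{\|T\|=n}\tilde{F}_k(T)=c_k$ (using $\binom{n}{k}\sim n^k/k!$). The workhorse is the recursion obtained by splitting copies of $E_k^d$ inside a strictly $d$-ary tree $T$ whose root has branches $T_1,\dots,T_d$ of sizes $n_1,\dots,n_d$, according to whether their most recent common ancestor lies strictly below or at the root of $T$. Writing $k=ds+b$ with $0\le b\le d-1$, and adopting the convention $c(E_0^d,T_i)=1$, this gives
$$c(E_k^d,T)=\sum_{i=1}^d c(E_k^d,T_i)+\sum_{\substack{A\subseteq\{1,\dots,d\}\\|A|=b}}\prod_{i\in A}c(E_{s+1}^d,T_i)\prod_{i\notin A}c(E_s^d,T_i).$$
The recursion defining $c_k$ is precisely what results from substituting the ansatz $c(E_k^d,T)\approx c_k n^k$ into the above at the perfectly balanced distribution $n_i=n/d$.

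For the \textbf{lower bound}, I would take $T_h$ to be the complete $d$-ary tree of depth $h$ (which is $E_{d^h}^d$). By symmetry the recursion collapses to a scalar one for $\tilde{a}_k(h):=c(E_k^d,T_h)/d^{hk}$, namely $\tilde{a}_k(h)=d^{1-k}\tilde{a}_k(h-1)+\binom{d}{b}d^{-k}\tilde{a}_s(h-1)^{d-b}\tilde{a}_{s+1}(h-1)^b$. Inductively assuming $\tilde{a}_j(h)\to c_j$ for $j<k$, this becomes a linear recursion whose (attracting, since $|d^{1-k}|<1$ for $k\ge 2$) fixed point is exactly $c_k$, yielding $I_d(E_k^d)\ge k!\,c_k$; general even trees $E_n^d$ handle arbitrary $n$. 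For the \textbf{upper bound}, I would use strong induction on $k$ combined with an inner induction on $\|T\|$. Substituting $c(E_j^d,T_i)\le c_j n_i^j$ for $j\in\{s,s+1\}$ (valid since $s+1<k$ except in the trivial case $d=k=2$) and $c(E_k^d,T_i)\le c_k n_i^k$ (by induction on tree size, since $n_i<n$) into the recursion and normalizing by $n^k$, the bound $c(E_k^d,T)\le c_k n^k$ reduces, via the identity $c_k(d^k-d)=\binom{d}{b}c_s^{d-b}c_{s+1}^b$, to the \emph{key inequality}
$$(d^k-d)\,e_b(\alpha)\prod_{i=1}^d\alpha_i^s\le\binom{d}{b}\Bigl(1-\sum_{i=1}^d\alpha_i^k\Bigr),$$
valid for all $\alpha_i\ge 0$ with $\sum_i\alpha_i=1$, where $\alpha_i=n_i/n$ and $e_b$ is the elementary symmetric polynomial of degree $b$. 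Equality at the balanced point $\alpha=(1/d,\dots,1/d)$ mirrors the extremality of even trees exactly.

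The main obstacle is the key inequality. The plan is to expand $1=(\sum_i\alpha_i)^k$ by the multinomial theorem, which rewrites $1-\sum_i\alpha_i^k$ as a positive sum of degree-$k$ monomials indexed by the non-diagonal compositions of $k$ into $d$ parts, each weighted by its multinomial coefficient. The left-hand side is a sum over monomials of the very specific ``even'' exponent profile $(s+1)^b s^{d-b}$. The target inequality should then follow from a careful application of weighted AM--GM within and across the $S_d$-orbits of these monomials, with the equality case of AM--GM recovering the balanced point. In small cases this ultimately boils down to classical symmetric function inequalities: for instance, for $d=3$, $k=4$ the key inequality simplifies to $q(2-q)\ge 15r$ with $q,r$ the second and third elementary symmetric polynomials of $\alpha$, which in turn follows from the Maclaurin-type inequalities $q\le 1/3$ and $q^2\ge 3r$. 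An appealing alternative, bypassing the orbit-by-orbit bookkeeping, is to show that the ratio $e_b(\alpha)\prod_i\alpha_i^s/(1-\sum_i\alpha_i^k)$ is Schur-concave on the open simplex, from which its maximum at the balanced point is immediate; this reduces the proof to a single two-variable smoothing verification.
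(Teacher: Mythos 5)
Your outline is correct, and it reaches the theorem by a genuinely different, more self-contained route than the paper. The paper obtains the upper bound from general machinery: Proposition~\ref{ind less prod ind} is proved by analysing an extremal sequence $T_n$, with a delicate case distinction on the size of the largest branch that leans on the $O(n^{-1})$ convergence estimates from \cite{AudaceStephanPaper1}; this is then specialised to balanced trees via Lemma~\ref{EvenDTlema} and Theorem~\ref{thm:supcond}, while the lower bound comes from Theorem~\ref{Completeproportion}, which proves $\gamma(D,H_n^d)\to\eta_d(D)$ with error $O(n^{-1})$ for \emph{every} $D$. You instead exploit the fact that the branches of an even tree are again even trees: the double induction (on $k$, then on $\|T\|$) through the branch recursion gives the clean pointwise bound $c(E_k^d,T)\le c_k\|T\|^k$, hence $\max_{\|T\|=n}\gamma(E_k^d,T)\le k!\,c_k\,(1+O(n^{-1}))$, and the lower bound follows from the scalar fixed-point recursion along complete trees $C_h^d$ (which suffices, since $I_d$ is a $\limsup$); this avoids the extremal-sequence analysis entirely and needs no quantitative input from the earlier paper --- the only care required is to run the induction over all $d$-ary trees with empty branches allowed, as in \eqref{The general recursion}, or else to invoke the known reduction to strictly $d$-ary trees. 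The crux is the same in both treatments: your key inequality $(d^k-d)\,e_b(\alpha)\prod_i\alpha_i^s\le\binom{d}{b}\bigl(1-\sum_i\alpha_i^k\bigr)$ is exactly the statement $\sup Z_{E_k^d}=|M(E_k^d)|/(d^k-d)$ of Lemma~\ref{EvenDTlema}, and the plan you sketch (expand $1-\sum_i\alpha_i^k$ by the multinomial theorem and compare each $S_d$-orbit with the balanced exponent vector, which is minimal in the majorization order) is precisely Muirhead's inequality, which is how the paper proves that lemma; citing Muirhead (or the Birkhoff/weighted AM--GM derivation you allude to) closes this, your only not-fully-written step, so nothing in the outline fails --- and your $d=3$, $k=4$ reduction to $q(2-q)\ge 15r$ is correct. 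The trade-off: the paper's heavier route buys generality (bounds and the $\eta_d=I_d$ criterion for arbitrary trees, and the $H_n^d$ analysis needed for Theorem~\ref{thm:speed} at every $n$), whereas your route buys brevity and an explicit finite-$n$ upper bound for the even family, but does not by itself recover the full $O(n^{-1})$ two-sided statement of Theorem~\ref{thm:speed}, since complete trees only realise sizes $d^h$.
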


As an example, Table~\ref{tab:eventernarytrees} indicates the (ternary) inducibilities of the first few even ternary trees.
\begin{table}[htbp]\centering
\caption{Some values of $I_3(E^3_k)$.}\label{tab:eventernarytrees}
\begin{tabular}{l||l|l|l|l|l|l|l|l|l|l|l|l}
$k$  & 1 & 2 & 3 & 4 & 5 & 6 & 7 & 8 & 9 & 10 & 11 & 12 \\  \hline  
$I_3(E^3_k)$ & $1$ & $1$ & $
 \frac{1}{4}$ & $
\frac{6}{13}$ & $
 \frac{3}{8}$ & $
 \frac{15}{121}$ & $ \frac{15}{208}$ & $ 
\frac{35}{2186}$ & $
 \frac{7}{5248}$ & $
 \frac{1575}{255886}$ & $
 \frac{4725}{453596}$ & $
 \frac{1247400}{194594881}$  \Tstrut\Bstrut \Tstrut\Bstrut \\ 
\end{tabular}
\end{table} 

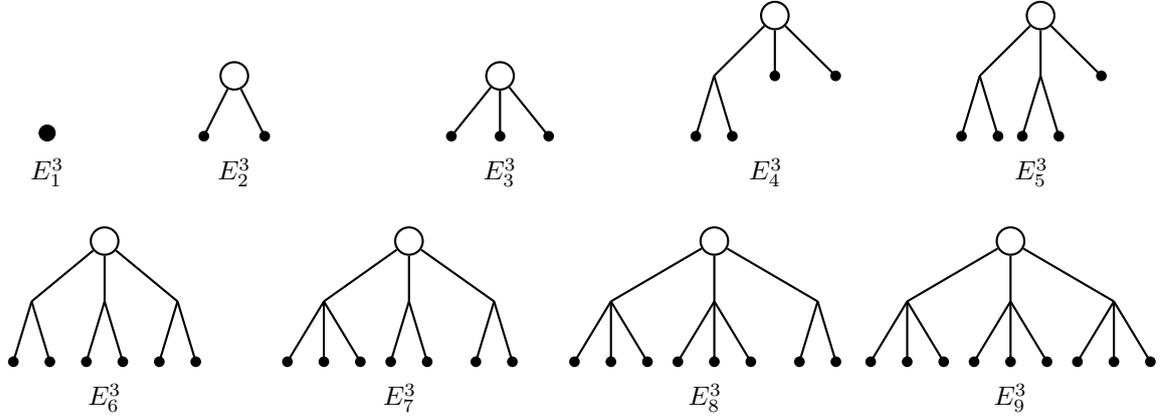
\begin{figure}[htbp]\centering
\begin{subfigure}[b]{0.1\textwidth} \centering 
\begin{tikzpicture}
\filldraw [black] circle (3pt);
\end{tikzpicture}
\caption{$E^3_1$}
  \end{subfigure}
\begin{subfigure}[b]{0.2\textwidth} \centering 
\begin{tikzpicture}[thick,level distance=10mm, scale=0.8]
\tikzstyle{level 1}=[sibling distance=10mm]
\tikzstyle{level 2}=[sibling distance=6mm]
\node [circle,draw]{}
child {[fill] circle (2pt)}
child {[fill] circle (2pt)};
\end{tikzpicture}
\caption{$E^3_2$}
  \end{subfigure}\quad
\begin{subfigure}[b]{0.2\textwidth} \centering 
\begin{tikzpicture}[thick,level distance=10mm, scale=0.8]
\tikzstyle{level 1}=[sibling distance=8mm]
\node [circle,draw]{}
child {[fill] circle (2pt)}
child {[fill] circle (2pt)}
child {[fill] circle (2pt)};
\end{tikzpicture}
\caption{$E^3_3$}
  \end{subfigure}\quad
\begin{subfigure}[b]{0.2\textwidth} \centering 
\begin{tikzpicture}[thick,level distance=10mm, scale=0.8]
\tikzstyle{level 1}=[sibling distance=10mm]
\tikzstyle{level 2}=[sibling distance=6mm]
\node [circle,draw]{}
child {child {[fill] circle (2pt)}child {[fill] circle (2pt)}}
child {[fill] circle (2pt)}
child {[fill] circle (2pt)};
\end{tikzpicture}
\caption{$E^3_4$}
  \end{subfigure}\quad
\begin{subfigure}[b]{0.2\textwidth} \centering 
\begin{tikzpicture}[thick,level distance=10mm, scale=0.8]
\tikzstyle{level 1}=[sibling distance=10mm]
\tikzstyle{level 2}=[sibling distance=6mm]
\node [circle,draw]{}
child {child {[fill] circle (2pt)}child {[fill] circle (2pt)}}
child {child {[fill] circle (2pt)}child {[fill] circle (2pt)}}
child {[fill] circle (2pt)};
\end{tikzpicture}
\caption{$E^3_5$}
  \end{subfigure}\newline \newline
\begin{subfigure}[b]{0.2\textwidth} \centering 
\begin{tikzpicture}[thick,level distance=10mm, scale=0.8]
\tikzstyle{level 1}=[sibling distance=12mm]
\tikzstyle{level 2}=[sibling distance=6mm]
\node [circle,draw]{}
child {child {[fill] circle (2pt)}child {[fill] circle (2pt)}}
child {child {[fill] circle (2pt)}child {[fill] circle (2pt)}}
child {child {[fill] circle (2pt)}child {[fill] circle (2pt)}};
\end{tikzpicture}
\caption{$E^3_6$}
  \end{subfigure}\quad
\begin{subfigure}[b]{0.25\textwidth} \centering 
\begin{tikzpicture}[thick,level distance=10mm, scale=0.8]
\tikzstyle{level 1}=[sibling distance=14mm]
\tikzstyle{level 2}=[sibling distance=6mm]
\node [circle,draw]{}
child {child {[fill] circle (2pt)}child {[fill] circle (2pt)}child {[fill] circle (2pt)}}
child {child {[fill] circle (2pt)}child {[fill] circle (2pt)}}
child {child {[fill] circle (2pt)}child {[fill] circle (2pt)}};
\end{tikzpicture}
\caption{$E^3_7$}
  \end{subfigure}\qquad ~
\begin{subfigure}[b]{0.25\textwidth} \centering 
\begin{tikzpicture}[thick,level distance=10mm, scale=0.8]
\tikzstyle{level 1}=[sibling distance=17mm]
\tikzstyle{level 2}=[sibling distance=6mm]
\node [circle,draw]{}
child {child {[fill] circle (2pt)}child {[fill] circle (2pt)}child {[fill] circle (2pt)}}
child {child {[fill] circle (2pt)}child {[fill] circle (2pt)}child {[fill] circle (2pt)}}
child {child {[fill] circle (2pt)}child {[fill] circle (2pt)}};
\end{tikzpicture}
\caption{$E^3_8$}
  \end{subfigure}
\begin{subfigure}[b]{0.25\textwidth} \centering 
\begin{tikzpicture}[thick,level distance=10mm, scale=0.8]
\tikzstyle{level 1}=[sibling distance=17mm]
\tikzstyle{level 2}=[sibling distance=6mm]
\node [circle,draw]{}
child {child {[fill] circle (2pt)}child {[fill] circle (2pt)}child {[fill] circle (2pt)}}
child {child {[fill] circle (2pt)}child {[fill] circle (2pt)}child {[fill] circle (2pt)}}
child {child {[fill] circle (2pt)}child {[fill] circle (2pt)}child {[fill] circle (2pt)}};
\end{tikzpicture}
\caption{$E^3_9$}
  \end{subfigure}
\caption{All even ternary trees with at most nine leaves.}\label{Someeventernarytrees}
\end{figure}

In addition, we show that the asymptotic formula
$$\lim_{n \to \infty} \max_{\substack{\|T\| = (d-1)n+1 \\ T \text{ strictly } d\text{-ary tree}}} \gamma(E_k^d,T) = I_d(E_k^d) + \mathcal{O}(n^{-1})$$
holds for all even trees, lending support to the conjecture that the error term in~\eqref{eq:strict_asymp} can generally be improved to $\mathcal{O}(n^{-1})$.

\medskip

Theorem~\ref{IndEvendAryTree} will be proven as part of a general approach in which a strict version of even trees plays a major role. For the inducibility of arbitrary $d$-ary trees, our approach yields both a general lower bound (Theorem~\ref{Completeproportion}) and an upper bound (Proposition~\ref{ind less prod ind}). In both cases, the bounds are determined recursively by decomposing a rooted tree into its \emph{branches}, i.e., the smaller trees that result as components when the root is removed. As it turns out, the inducibility of a tree can often be bounded in terms of the product of the inducibilities of its branches, both from above and below. As a particularly simple example, we have
$$I_d(D)\leq \prod_{i=1}^d I_d(D_i)$$
for every $d$-ary tree $D$ with branches $D_1,D_2,\ldots,D_d$ in which branches with the same number of leaves are isomorphic (Corollary~\ref{indA}). We provide a more general version of this inequality as well as further upper and lower bounds of a similar nature. We will demonstrate in some examples how they are applied to compute or approximate the inducibility in different cases.

\section{A special limit}

The proof of Theorem~\ref{IndEvendAryTree} (as well as other results) relies on a general recursion for the number of copies $c(D,T)$ of a tree $D$ inside a larger tree $T$, based on the decomposition of a rooted tree into its branches. It will be useful for notational purposes to allow empty trees (in particular, as branches of a tree) with $0$ leaves. If $D$ is empty, then we set $c(D,T) = 1$; accordingly, we will also set $I_d(D) = 1$ if $D$ is empty. If $T$ is empty, but $D$ is not, then $c(D,T) = 0$.

For a $d$-ary tree $D$ with branches $D_1,D_2,\ldots,D_d$ (some of which are allowed to be empty), we define the equivalence relation $\sim_{D}$ on the set of all permutations of $[d] = \{1,2,\ldots,d\}$ as follows: for two permutations $\pi$ and $\pi^{\prime}$ of $[d]$,
$$\big(\pi(1),\pi(2),\ldots,\pi(d)\big) \sim_{D} \big(\pi^{\prime}(1),\pi^{\prime}(2),\ldots,\pi^{\prime}(d)\big)$$
if for every $j \in [d]$, the tree $D_{\pi(j)}$ is isomorphic to $D_{\pi^{\prime}(j)}$ as a rooted tree (i.e., there is a root-preserving isomorphism between the two; two empty trees are of course considered isomorphic). Moreover, let $M(D)$ be a complete set of representatives of all equivalence classes of $\sim_{D}$. One verifies easily that all these equivalence classes have the same cardinality.
The following identity holds for every $d$-ary tree $T$ with branches $T_1,T_2,\ldots,T_d$ (some of which might also be empty).
\begin{equation}\label{The general recursion} 
c(D,T)=\sum_{i=1}^d c(D,T_i)+ \sum_{\pi \in M(D)}~\prod_{j=1}^d c\big(D_{\pi(j)},T_{j}\big)\,.
\end{equation}
Formula~\eqref{The general recursion} is established as follows:
\begin{itemize}
\item The term $\sum_{i=1}^d c(D,T_i)$ is the number of subsets of leaves that belong to a single branch of $T$ and induce a copy of $D$.
\item The expression $\prod_{j=1}^d c\big(D_{\pi(j)},T_{j}\big)$ stands for the number of copies of $D$ in which its branches $D_{\pi(1)},D_{\pi(2)},\ldots, D_{\pi(d)}$ are induced by subsets of leaves of $T_{1},T_{2}, \ldots,T_{d}$, respectively. Note that it is consistent here to set $c\big(D_{\pi(j)},T_{j}\big) = 1$ if $D_{\pi(j)}$ is empty, and $c\big(D_{\pi(j)},T_{j}\big) = 0$ if $T_j$ is empty, but $D_{\pi(j)}$ is not. This needs to be summed over all distinct (non-isomorphic, to be precise) permutations of the branches $D_1,D_2,\ldots,D_d$, for which $M(D)$ provides a set of representatives.
\end{itemize}
Equation~\eqref{The general recursion} will be used repeatedly in various places of this paper.

\medskip

For our next step, we have to introduce a strict analogue of even trees, which are strictly $d$-ary trees with a definition similar to even trees:

\begin{itemize}
\item The tree $H_0^d$ consists only of a single leaf.
\item If $n > 0$, we write $n - 1 = ds + b$, with $b \in \{0,1,\ldots,d-1\}$. Take $d-b$ copies of $H_s^d$ and $b$ copies of $H_{s+1}^d$, and connect a new common root to each of their roots by an edge to obtain $H_n^d$. For future purposes, let us write $s_1(n) = s_2(n) = \cdots =  s_{d-b}(n) = s$ and $s_{d-b+1}(n) = s_{d-b+2}(n) = \cdots = s_d(n) = s+1$ to encode the branches. 
\end{itemize}
See Figure~\ref{treeH33} for an example. One easily shows that $H_n^d$ is a strictly $d$-ary tree with $(d-1)n+1$ leaves for every $n$. As in the sequence of even $d$-ary trees, the leaves are as evenly distributed among the branches as possible. We first prove that $\gamma(D,H_n^d)$ converges to a positive limit for every fixed $d$-ary tree $D$, which immediately provides a lower bound on the inducibility.

\begin{figure}[htbp]\centering
\begin{tikzpicture}[thick,level distance=15mm, scale=0.75]
\tikzstyle{level 1}=[sibling distance=20mm]   
\tikzstyle{level 2}=[sibling distance=6mm]   
\node [circle,draw]{}
child {child {[fill] circle (2pt)}child {[fill] circle (2pt)}child {[fill] circle (2pt)}}
child {child {[fill] circle (2pt)}child {[fill] circle (2pt)}child {[fill] circle (2pt)}}
child {[fill] circle (2pt)};
\end{tikzpicture}
\caption{The tree $H^3_3$.}\label{treeH33}
\end{figure}
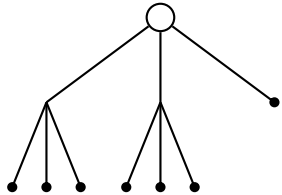

\begin{theorem}\label{Completeproportion}
For every fixed $d$-ary tree $D$, the limit
$$\eta_d(D) = \lim_{n \to \infty} \gamma(D,H_n^d)$$
exists. It can be determined recursively as follows: it is equal to $1$ when $D$ is empty or only consists of a single vertex. Otherwise, let $D_1,D_2,\ldots,D_d$ be the branches of $D$ (some possibly empty), and define $M(D)$ as in~\eqref{The general recursion}. We have
$$\eta_d(D) =  \binom{\|D\|}{\|D_1\|,\|D_2\|,\ldots,\|D_d\|} \frac{|M(D)|}{d^{\|D\|}-d} \prod_{i=1}^d \eta_d(D_i)\,.$$
Moreover, we have
$$\gamma(D,H_n^d) = \eta_d(D) + O\big(n^{-1}\big)\,,$$
where the constant implied by the $O$-term only depends on $d$ and $D$.
\end{theorem}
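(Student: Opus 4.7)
My plan is to proceed by strong induction on $\|D\|$. The base cases $\|D\|\leq 1$ are immediate since $\gamma(D,T)=1$ identically. The case $\|D\|=2$ is also immediate: $D$ is then the unique cherry and $\gamma(D,T)=1$ for all $T$ with $\|T\|\geq 2$; one verifies that the proposed formula gives $\eta_d(D)=\binom{d}{2}\cdot 2/(d^2-d)=1$, matching this value with zero error.

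For the inductive step, fix $D$ with $k:=\|D\|\geq 3$ and branches $D_1,\ldots,D_d$, set $k_i:=\|D_i\|$ and $L_m:=(d-1)m+1$, and write $n-1=ds+b$ so that $H_n^d$ has $d-b$ copies of $H_s^d$ and $b$ copies of $H_{s+1}^d$ as branches. Apply the recursion~\eqref{The general recursion} to $T=H_n^d$ and divide by $\binom{L_n}{k}$ to obtain
$$\gamma(D,H_n^d)=\frac{(d-b)\binom{L_s}{k}}{\binom{L_n}{k}}\gamma(D,H_s^d)+\frac{b\binom{L_{s+1}}{k}}{\binom{L_n}{k}}\gamma(D,H_{s+1}^d)+\frac{1}{\binom{L_n}{k}}\sum_{\pi\in M(D)}\prod_{j=1}^d c(D_{\pi(j)},T_j),$$
where $T_j\in\{H_s^d,H_{s+1}^d\}$. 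Since $L_s,L_{s+1}=L_n/d+O(1)$, elementary estimates give $\binom{L_s}{k}/\binom{L_n}{k}=d^{-k}+O(n^{-1})$ and likewise for $L_{s+1}$. For the product term, the induction hypothesis yields $c(D_i,T_j)=\eta_d(D_i)\binom{\|T_j\|}{k_i}+O(n^{k_i-1})$; expanding the product (whose leading piece $\prod_i\eta_d(D_{\pi(i)})\prod_j\binom{\|T_j\|}{k_{\pi(j)}}$ is $\pi$-independent modulo lower-order terms, since $\prod_j k_{\pi(j)}!=\prod_i k_i!$) shows that the final summand contributes $|M(D)|\binom{k}{k_1,\ldots,k_d}d^{-k}\prod_i\eta_d(D_i)+O(n^{-1})$.

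The claimed value of $\eta_d(D)$ is precisely the unique solution of the fixed-point equation $\eta(1-d^{1-k})=|M(D)|\binom{k}{k_1,\ldots,k_d}d^{-k}\prod_i\eta_d(D_i)$, so subtracting from the displayed identity shows that $\epsilon_n:=\gamma(D,H_n^d)-\eta_d(D)$ satisfies
$$|\epsilon_n|\leq d^{1-k}\max(|\epsilon_s|,|\epsilon_{s+1}|)+Cn^{-1}$$
for a constant $C=C(d,D)$. The main obstacle is then converting this contractive recursion into the sharp $O(n^{-1})$ bound. My approach is to set $f(n):=n\,|\epsilon_n|$ and use $s\geq(n-1)/d-1$, which transforms the inequality into $f(n)\leq d^{2-k}(1+O(n^{-1}))\max(f(s),f(s+1))+C'$. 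Since $k\geq 3$ forces $d^{2-k}<1$, iterating this contractive recursion over the roughly $\log_d n$ scales needed to reach the base case yields a uniform bound on $f$, and hence $|\epsilon_n|=O(n^{-1})$. This simultaneously establishes convergence and the claimed error rate, closing the induction.
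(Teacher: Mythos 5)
Your proposal is correct and follows essentially the same strategy as the paper: both apply the branch recursion~\eqref{The general recursion} to $T=H_n^d$, induct on $\|D\|$ to handle the product term, and then run an inner argument in $n$ that exploits the same contraction constant $d^{2-k}<1$ for $k\geq 3$ to pin the error at order $n^{-1}$. The only difference is cosmetic: you phrase the inner step as a contraction for $f(n)=n\,|\gamma(D,H_n^d)-\eta_d(D)|$, whereas the paper proves two-sided bounds $\phi(D)n^k\pm\kappa(D)n^{k-1}$ on the count $c(D,H_n^d)$ with $\kappa(D)$ chosen as a supremum that makes the induction on $n$ close.
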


\begin{proof}
Define $\phi(D) = \frac{(d-1)^{\|D\|}\eta_d(D)}{\|D\|!}$, which satisfies the recursion
\begin{equation}\label{eq:phi_dec}
\phi(D) = \frac{|M(D)|}{d^{\|D\|}-d} \prod_{i=1}^d \phi(D_i)\,.
\end{equation}
Let us set $k = \|D\|$ for simplicity and use induction on $k$ to prove that there exists a positive constant $\kappa(D)$ such that
\begin{equation}\label{eq:upper_bound}
c(D,H_n^d) \leq \phi(D) n^k + \kappa(D) n^{k-1}
\end{equation}
holds for all $n \geq 0$. This is straightforward for $k = 0$, where $D$ is the empty tree, $c(D,H_n^d) = 1$ and $\phi(D) = 1$, so that we can take $\kappa(D) = 0$. Note here that we set $0^0 = 1$. Likewise, the cases $k = 1$ (where $c(D,H_n^d) = \|H_n^d\| = (d-1)n+1$ and $\phi(D) = d-1$) and $k = 2$ (where $c(D,H_n^d) = \binom{\|H_n^d\|}{2} = \binom{(d-1)n+1}{2}$ and $\phi(D) = \frac{(d-1)^2}{2}$) are easy. For the induction step, we may now assume that $k \geq 3$ and that~\eqref{eq:upper_bound} holds for all branches $D_1,D_2,\ldots,D_d$ of $D$. In other words, writing $\ell_i = \|D_i\|$, there exist constants $\kappa(D_1),\kappa(D_2),\ldots,\kappa(D_d)$ such that
$$c(D_i,H_n^d) \leq \phi(D_i) n^{\ell_i} + \kappa(D_i) n^{\ell_i-1}$$
holds for all $n \geq 0$ and all $i \in [d]$.

\medskip

Now we prove ~\eqref{eq:upper_bound} for $D$ by induction on $n$; the precise value of the constant $\kappa(D)$ will be specified later. The inequality holds trivially for $n =0$ though, regardless of the value of $\kappa(D)$. For the induction step, we use the recursion~\eqref{The general recursion}, where $T = H_n^d$. The branches $T_i$ are of the form $H_s^d$ or $H_{s+1}^d$, where $s = \lfloor \frac{n-1}{d} \rfloor$. Thus the final sum in~\eqref{The general recursion} is bounded above by
\begin{align*}
\sum_{\pi \in M(D)}~&\prod_{j=1}^d \Big( \phi(D_{\pi(j)}) \Big( \Big\lfloor \frac{n-1}{d} \Big\rfloor + 1 \Big)^{\ell_{\pi(j)}} + \kappa(D_{\pi(j)}) \Big( \Big\lfloor \frac{n-1}{d} \Big\rfloor + 1 \Big)^{\ell_{\pi(j)}-1} \Big) \\
 &= \sum_{\pi \in M(D)}~\prod_{j=1}^d \Big( \phi(D_{\pi(j)}) \Big( \frac{n}{d} \Big)^{\ell_{\pi(j)}} + O \big(n^{\ell_{\pi(j)}-1} \big) \Big) \\
&= |M(D)| \Big( \prod_{i=1}^d \phi(D_i) \Big) \frac{n^k}{d^k} + O(n^{k-1})\,,
\end{align*}
since $\ell_1 + \ell_2 + \cdots + \ell_d = k$. In view of~\eqref{eq:phi_dec}, we can write this as 
$$\phi(D) \big(1 - d^{1-k}\big) n^k + O(n^{k-1})\,.$$
Therefore, there exists a positive constant $C(D)$ such that the final sum in~\eqref{The general recursion}, applied to $T = H_n^d$, is bounded above by
$$\phi(D) \big(1 - d^{1-k}\big) n^k + C(D)n^{k-1}$$
for all $n \geq 1$. Moreover, applying the induction hypothesis (with respect to $n$) to the branches of $H_n^d$, we obtain
$$c(D,H_n^d) \leq \sum_{i=1}^{d} \Big( \phi(D) s_i(n)^k + \kappa(D) s_i(n)^{k-1} \Big) + \phi(D) \big(1 - d^{1-k}\big) n^k + C(D)n^{k-1}\,.$$
Now let us finally specify the value of $\kappa(D)$. We take it to be
$$\kappa(D) = \sup_{n \geq 1} \frac{\phi(D) \sum_{i=1}^{d} \big| s_i(n)^k - ( \frac{n}{d} )^k \big| + C(D)n^{k-1}}{n^{k-1} - \sum_{i=1}^{d} s_i(n)^{k-1}}\,.$$
To see why this constant is positive and finite, note first that the denominator is always positive, as $\sum_{i=1}^{d} s_i(n)^{k-1} \leq (\sum_{i=1}^d s_i(n) )^{k-1} = (n-1)^{k-1}$. The numerator is clearly positive, so the fraction is positive for every $n$. Moreover, since $s_i(n) = \frac{n}{d} + O(1)$ for each $i$, the numerator is $O(n^{k-1})$, and the denominator is $n^{k-1}(1-d^{2-k}) + O(n^{k-2})$. The factor $1-d^{2-k}$ is positive as we are assuming $k \geq 3$. Therefore, the quotient remains bounded as $n \to \infty$. With this definition, we obtain
\begin{align*}
c(D,H_n^d) &\leq \sum_{i=1}^{d} \Big( \phi(D) \Big( \frac{n}{d} \Big)^k + \phi(D) \Big| s_i(n)^k - \Big( \frac{n}{d} \Big)^k \Big|  + \kappa(D) s_i(n)^{k-1} \Big) \\
&\quad  + \phi(D) \big(1 - d^{1-k}\big) n^k + C(D)n^{k-1} \\
&= \phi(D) n^k + \phi(D) \sum_{i=1}^{d} \Big| s_i(n)^k - \Big( \frac{n}{d} \Big)^k \Big| + C(D)n^{k-1} + \kappa(D) \sum_{i=1}^d  s_i(n)^{k-1} \\
&\leq \phi(D) n^k +  \kappa(D) \Big( n^{k-1} - \sum_{i=1}^{d} s_i(n)^{k-1} \Big) + \kappa(D) \sum_{i=1}^d  s_i(n)^{k-1} \\
&= \phi(D) n^k + \kappa(D) n^{k-1}\,,
\end{align*}
completing the induction with respect to $n$ and thus also with respect to $k$.

\medskip

In the same fashion, one proves that there exists a constant $\lambda(D)$ such that
$$c(D,H_n^d) \geq \phi(D) n^k - \lambda(D) n^{k-1}$$
holds for all $n$, which completes the proof of our theorem. We skip the details. 
\end{proof}
Note that $\eta_d(D) = \lim_{n \to \infty} \gamma\big(D,H_n^d \big) \leq I_d(D)$ holds by definition, so Theorem~\ref{Completeproportion} provides a lower bound on the inducibility. We now show that this lower bound is in fact sharp for (among others) even trees, thereby proving Theorem~\ref{IndEvendAryTree}. This is achieved by proving a matching upper bound, which is derived in the following.

\section{Upper bounds involving branches}

We first need some notation. For a fixed $d\geq 2$ and a given $d$-ary tree $D$ with branches $D_1,D_2,\ldots,D_d$ (some of them possibly empty), we define the $d$-dimensional real function
$$Z_D(x_1,x_2,\ldots,x_d):=\frac{1}{1-\sum_{i=1}^d x_i^{\|D\|}} \sum_{\pi \in M(D)}~\prod_{j=1}^d x_{j}^{\|D_{\pi(j)}\|}\,.$$
It follows from the definition that this function is always symmetric in its variables. This is because $\pi \sim_D \pi'$ implies that
$D_{\pi(j)}$ and $D_{\pi'(j)}$ are isomorphic for all $j$, thus $\|D_{\pi(j)}\| = \|D_{\pi'(j)}\|$ for all $j$. It follows that the final product is the same for all members of an equivalence class of $\sim_D$, and we can write $Z_D$ as
\begin{equation}\label{eq:ZD_alt}
Z_D(x_1,x_2,\ldots,x_d) =\frac{1}{1-\sum_{i=1}^d x_i^{\|D\|}} \cdot \frac{|M(D)|}{d!} \sum_{\pi \in S_d}~\prod_{j=1}^d x_{j}^{\|D_{\pi(j)}\|}\,.
\end{equation}
For example, when $D$ is the even ternary tree $E_7^3$ with seven leaves, as shown in Figure~\ref{Someeventernarytrees}, the function is given by
$$Z_{E_7^3}(x_1,x_2,x_3) = \frac{x_1^3 x_2^2x_3^2+x_1^2 x_2^3x_3^2 + x_1^2 x_2^2x_3^3}{1-x_1^7-x_2^7-x_3^7}\,.$$
The following proposition bounds the inducibility of a tree $D$ in terms of the inducibilities of its branches and the function $Z_D$. 
\begin{proposition}\label{ind less prod ind}
Let $D$ be a fixed $d$-ary tree with branches $D_1,D_2,\ldots,D_d$ (some of them possibly empty). Then the following inequality holds:
\begin{equation}\label{eq:sup-ineq}
I_d(D)\leq \binom{\|D\|}{\|D_1\|,\|D_2\|,\ldots,\|D_d\|} \Big(\prod_{i=1}^d I_d(D_i) \Big) \sup_{\substack{0\leq x_1,x_2,\ldots,x_d<1\\ x_1+x_2+\cdots +x_d=1}} 	Z_D(x_1,x_2,\ldots,x_d)\,.
\end{equation}
\end{proposition}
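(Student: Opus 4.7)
The plan is to derive the inequality by combining the branch recursion~\eqref{The general recursion} with the global asymptotic bound~\eqref{eq:arb_asymp} applied to each factor on its right-hand side. Let $D$ have branches $D_1,\ldots,D_d$ of sizes $k_i=\|D_i\|$ and total size $k=\|D\|$, and consider an arbitrary $d$-ary tree $T$ with branches $T_1,\ldots,T_d$ of sizes $n_i=\|T_i\|$ and total size $n=\|T\|$. Substituting $c(D,T_i)\leq I_d(D)\binom{n_i}{k}+O(n^{k-1})$ into the first sum of~\eqref{The general recursion}, the analogous bound $c(D_{\pi(j)},T_j)\leq I_d(D_{\pi(j)})\binom{n_j}{k_{\pi(j)}}+O(n^{k_{\pi(j)}-1})$ into every factor in the product sum, and dividing by $\binom{n}{k}$, should yield after setting $x_i=n_i/n$
$$\gamma(D,T)\leq I_d(D)\sum_{i=1}^d x_i^k \;+\; \binom{k}{k_1,\ldots,k_d}\Big(\prod_{i=1}^d I_d(D_i)\Big)\sum_{\pi\in M(D)}\prod_{j=1}^d x_j^{k_{\pi(j)}}\;+\;O(n^{-1}),$$
where the implicit constant depends only on $d$ and $D$. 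Note that the multinomial coefficient $\binom{k}{k_{\pi(1)},\ldots,k_{\pi(d)}}$ is independent of $\pi$ and coincides with $\binom{k}{k_1,\ldots,k_d}$.

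Next I would fix a sequence of $d$-ary trees $T^{(n)}$ with $\|T^{(n)}\|=n\to\infty$ and $\gamma(D,T^{(n)})\to I_d(D)$ (available by~\eqref{eq:limit}), and by compactness of the probability simplex extract a subsequence along which $(x_1^{(n)},\ldots,x_d^{(n)})$ converges to some limit $(x_1^\ast,\ldots,x_d^\ast)$. In the generic case where $\max_i x_i^\ast<1$, at least two of the $x_i^\ast$ are positive; combined with $k\geq 2$ this forces $\sum_i (x_i^\ast)^k<1$. Passing to the limit in the displayed inequality and rearranging then gives exactly
$$I_d(D)\leq\binom{k}{k_1,\ldots,k_d}\Big(\prod_{i=1}^d I_d(D_i)\Big)Z_D(x_1^\ast,\ldots,x_d^\ast),$$
which is bounded above by the supremum in~\eqref{eq:sup-ineq}.

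The main obstacle is the remaining case in which some $x_i^\ast=1$: here the rearrangement above becomes vacuous because $\sum_i(x_i^\ast)^k=1$. To handle it, I would walk down the heavy chain of $T^{(n)}$ before applying the inequality, repeatedly passing to the heaviest branch so long as it contains more than a $(1-\epsilon)$-fraction of the current leaves. Since $D$ has at least two nonempty branches (because its root has degree at least two), every summand of the product term in~\eqref{The general recursion} involves at least one factor attached to a light branch, and the leaves outside the heavy chain form a disjoint collection whose subtree sizes $m_1,m_2,\ldots$ satisfy $\sum_i m_i\leq n$ with each $m_i\leq\epsilon n$. Using $\sum_i \binom{m_i}{k}\leq \epsilon^{k-1}\binom{n}{k}$ and an analogous bound on the product terms, the total density loss over the entire descent is $O(\epsilon)$. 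After finitely many steps one arrives at a subtree $U^{(n)}$ whose branches all satisfy $x_i\leq 1-\epsilon$, whose size still tends to infinity, and for which $\gamma(D,U^{(n)})\geq I_d(D)-O(\epsilon)$. Applying the generic case to $U^{(n)}$, and then letting $\epsilon\to 0$, closes the argument. The delicate point in this step — and the one I would work out most carefully — is verifying that the accumulated loss along the heavy-chain descent is genuinely $O(\epsilon)$ rather than growing with the length of the chain, which hinges on the disjointness of the discarded leaf sets.
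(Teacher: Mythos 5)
Your generic case (all limiting branch proportions bounded away from $1$) is essentially the paper's Case~1 and is sound. The genuine gap is in the degenerate case where some $x_i^\ast=1$, and this case is not a removable technicality: it really occurs (for the binary tree $D$ whose branches are a leaf and a cherry, the maximizers are caterpillars, whose heaviest branch carries all but one leaf), and the paper spends most of its proof on it. Two things break in the heavy-chain descent. First, the terminal tree $U^{(n)}$ need not have size tending to infinity: on a caterpillar each step discards a single leaf, and the stopping rule ``heaviest branch at most a $(1-\epsilon)$-fraction'' is first met only when the current tree has about $1/\epsilon$ leaves, a bound independent of $n$; applying the generic-case inequality to a bounded tree yields nothing, since its $O(\|U^{(n)}\|^{-1})$ error does not vanish. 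Second, the accumulated density loss is not $O(\epsilon)$. Disjointness of the discarded sets $M_t$ gives $\sum_t |M_t|\le n$ and hence bounds the total number of \emph{lost copies} by $O\big(\binom{n}{k}\big)$ --- but what you need is the loss in the \emph{density} at the terminal tree, whose normalisation $\binom{\|U^{(n)}\|}{k}$ shrinks along the descent. Per step one gets $\gamma(D,S_{t+1})\ge\gamma(D,S_t)-O(|M_t|/\|S_t\|)$ for the successive subtrees $S_t$ along the chain, and $\sum_t |M_t|/\|S_t\|$ behaves like $\log\big(n/\|U^{(n)}\|\big)$, which is unbounded; no telescoping rescues an $O(\epsilon)$ total. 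Indeed, for the ``leaf plus $D_2$'' shape the copies rooted along the heavy chain account for essentially \emph{all} of $c(D,T^{(n)})$ (a $\Theta\big(\binom{n}{k}\big)$ quantity), so any accounting that writes them off as negligible must fail.

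The paper handles the degenerate case by a different mechanism: using the near-monotonicity of $n\mapsto\max_{\|T\|=n}\gamma(D,T)$ it first shows that a vanishing light part forces $D$ to have exactly two nonempty branches, one of which is a single leaf (otherwise one derives $\gamma(D,T^{(n)})\to 0$, contradicting $I_d(D)>0$), and for that remaining shape it proves $I_d(D)\le I_d(D_2)$ directly, matching this against the third part of Lemma~\ref{lem:supGen}, where the supremum of $Z_D$ equals $1/\|D\|$ and is attained only as a boundary limit of the simplex. That boundary behaviour is exactly why an argument attempting to push every maximizing sequence back into the interior of the simplex cannot work: for these trees the extremal configurations genuinely live at the corner.
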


The benefit of this proposition is that the combinatorial problem is translated to a purely analytic question. The supremum on the right side of the inequality can be determined explicitly in many cases. In order to prove the proposition, we first need a technical lemma on the supremum occurring in~\eqref{eq:sup-ineq} that will also be useful at a later point.

\begin{lemma}\label{lem:supGen}
Let $D$ be a $d$-ary tree, and let $D_1,D_2,\ldots,D_d$ be its branches (some of which might be empty). Moreover, let $m_{j}$ be the number of branches with $j$ leaves for every $j \geq 0$. We have
$$\sup_{\substack{0\leq  x_1,x_2,\ldots,x_d<1 \\ x_1+x_2+\cdots +x_d=1}}Z_D(x_1,x_2,\ldots,x_d)  \leq \frac{|M(D)| \prod_{j \geq 0} m_j!}{d!} \cdot \binom{\|D\|}{\|D_1\|,\|D_2\|,\ldots,\|D_d\|}^{-1}\,.$$
In particular, if branches with the same number of leaves are isomorphic, then we have
$$\sup_{\substack{0\leq  x_1,x_2,\ldots,x_d<1 \\ x_1+x_2+\cdots +x_d=1}}Z_D(x_1,x_2,\ldots,x_d)  \leq \binom{\|D\|}{\|D_1\|,\|D_2\|,\ldots,\|D_d\|}^{-1}\,.$$
Finally, if $D$ has only two nonempty branches $D_1,D_2$ with $\|D_1\|=1$ and $\|D_2\|>1$, then
$$\sup_{\substack{0\leq  x_1,x_2,\ldots,x_d<1 \\ x_1+x_2+\cdots +x_d=1}}Z_D(x_1,x_2,\ldots,x_d) =\frac{1}{\|D\|}\,.$$
\end{lemma}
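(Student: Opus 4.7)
The plan is to pass to the symmetric reformulation~\eqref{eq:ZD_alt} of $Z_D$ and then apply the multinomial theorem on the simplex. Write $k = \|D\|$ and $\ell_i = \|D_i\|$. First, since the numerator $\sum_{\pi \in S_d}\prod_{j=1}^d x_j^{\ell_{\pi(j)}}$ depends only on the multiset of leaf counts, it equals $\prod_{j \geq 0} m_j!$ times the sum over the $d!/\prod_j m_j!$ \emph{distinct} rearrangements of $(\ell_1,\ldots,\ell_d)$. For the denominator, the constraint $\sum_i x_i = 1$ together with the multinomial expansion of $(x_1+\cdots+x_d)^k$ gives
$$1 - \sum_{i=1}^d x_i^k = \sum_{\substack{\lambda_1+\cdots+\lambda_d = k \\ \lambda_i < k \text{ for all } i}} \binom{k}{\lambda_1,\ldots,\lambda_d}\prod_{j=1}^d x_j^{\lambda_j}\,.$$
Since the root of $D$ has at least two nonempty branches, every $\ell_i < k$, so each rearrangement of $(\ell_1,\ldots,\ell_d)$ appears among the admissible $\lambda$'s on the right. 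All terms are nonnegative, so dropping the others yields
$$1 - \sum_i x_i^k \;\geq\; \binom{k}{\ell_1,\ldots,\ell_d}\sum_{\lambda \text{ a rearrangement of }\ell}\prod_{j=1}^d x_j^{\lambda_j}\,.$$
Taking the quotient and multiplying by $|M(D)|/d!$ proves the first inequality of the lemma.

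For the second statement, I would observe that when branches with equal leaf count are isomorphic, $\pi \sim_D \pi'$ is equivalent to the tuples $(\ell_{\pi(1)},\ldots,\ell_{\pi(d)})$ and $(\ell_{\pi'(1)},\ldots,\ell_{\pi'(d)})$ coinciding. Thus $\sim_D$-equivalence classes are in bijection with distinct rearrangements of $(\ell_1,\ldots,\ell_d)$, giving $|M(D)| = d!/\prod_j m_j!$; the prefactor $|M(D)|\prod_j m_j!/d!$ collapses to $1$, and the sharper bound $\sup Z_D \leq \binom{k}{\ell_1,\ldots,\ell_d}^{-1}$ follows immediately.

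For the third statement, the hypothesis fits the ``same leaf count implies isomorphic'' setting (the two nonempty branches have distinct leaf counts, and the empty branches are mutually isomorphic), so the second part of the lemma already provides the upper bound $\sup Z_D \leq \binom{k}{1,k-1,0,\ldots,0}^{-1} = 1/k$. For the matching lower bound, I plan to evaluate $Z_D$ along the family $(x_1,x_2,x_3,\ldots,x_d) = (a, 1-a, 0, \ldots, 0)$ with $a \to 0^+$; since $\max_i x_i = 1-a < 1$, this stays within the domain. Only the two monomials that place the $1$-leaf and $(k-1)$-leaf branches into positions $1$ and $2$ survive, giving numerator $a(1-a)^{k-1} + (1-a)a^{k-1} = a + O(a^2)$, while the denominator equals $1 - a^k - (1-a)^k = ka + O(a^2)$, so the ratio tends to $1/k$. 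The only genuine obstacle across the whole lemma is the combinatorial bookkeeping tying together $|M(D)|$, the distinct leaf-count rearrangements, and the factor $\prod_j m_j!$; once the multinomial identity is in place, everything else reduces to routine estimates.
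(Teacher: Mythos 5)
Your proof is correct and takes essentially the same route as the paper's: the multinomial expansion on the simplex with the nonnegative remaining terms dropped for the first bound, the identification $|M(D)| = d!/\prod_{j\geq 0} m_j!$ for the second, and attainment of $1/k$ along a sequence approaching the boundary for the third. The only (immaterial) difference is your choice of test points $(a,1-a,0,\ldots,0)$ with $a \to 0^+$ where the paper uses $(\epsilon,\ldots,\epsilon,1-(d-1)\epsilon)$; both computations yield the limit $1/k$.
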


\begin{proof}
Let us use the abbreviations $k = \|D\|$ and $\ell_i = \|D_i\|$.
Recall that we can write the function $Z_D$ as
$$Z_D(x_1,x_2,\ldots,x_d) =\frac{1}{1-\sum_{i=1}^d x_i^{k}} \cdot \frac{|M(D)|}{d!} \sum_{\pi \in S_d}~\prod_{j=1}^d x_{j}^{\ell_{\pi(j)}}\,.$$
We apply the multinomial theorem to $(x_1+x_2+\cdots+x_d)^k$ and split the resulting terms according to the exponents of $x_1,x_2,\ldots,x_d$ into permutations of $(k,0,\ldots,0)$, permutations of $(\ell_1,\ell_2,\ldots,\ell_d)$ and the rest. The terms corresponding to permutations of $(k,0,\ldots,0)$ are clearly $\sum_{i=1}^d x_i^k$. Monomials where the exponents form a permutation of $(\ell_1,\ell_2,\ldots,\ell_d)$ have a coefficient of $\binom{k}{\ell_1,\ell_2,\ldots,\ell_d}$, and the sum of all such monomials can be expressed as
$$\frac{1}{\prod_{j \geq 0} m_j!} \sum_{\pi \in S_d}\prod_{j=1}^d x_{j}^{\ell_{\pi(j)}}\,,$$
since each of them occurs precisely $\prod_{j \geq 0} m_j!$ times in the sum over all permutations in $S_d$. So the contribution to the expansion of $(x_1+x_2+\cdots+x_d)^k$ according to the multinomial theorem can be expressed as
$$\frac{1}{\prod_{j \geq 0} m_j!} \binom{k}{\ell_1,\ell_2,\ldots,\ell_d} \sum_{\pi \in S_d}\prod_{j=1}^d x_{j}^{\ell_{\pi(j)}}\,.$$
The remaining terms corresponding to exponents that are not permutations of $(k,0,\ldots,0)$ or $(\ell_1,\ell_2,\ldots,\ell_d)$
are clearly nonnegative whenever the variables $x_1,x_2,\ldots,x_d$ are, so we obtain
$$(x_1+x_2 + \cdots + x_d)^k \geq \sum_{i=1}^d x_i^k + \frac{1}{\prod_{j \geq 0} m_j!} \binom{k}{\ell_1,\ell_2,\ldots,\ell_d} \sum_{\pi \in S_d} \prod_{j=1}^d x_{j}^{\ell_{\pi(j)}}\,.$$
If additionally $x_1+x_2+\cdots+x_d = 1$, then we can easily manipulate this to get
$$Z_D(x_1,x_2,\ldots,x_d) = \frac{1}{1-\sum_{i=1}^d x_i^k} \cdot \frac{|M(D)|}{d!} \sum_{\pi \in S_d} \prod_{j=1}^d x_{j}^{\ell_{\pi(j)}} \leq \frac{|M(D)| \prod_{j \geq 0} m_j!}{d!} \binom{k}{\ell_1,\ell_2,\ldots,\ell_d}^{-1}\,,$$
which proves the first part.

\medskip

If we assume that two branches of $D$ are isomorphic if and only if they have the same number of leaves, then the equivalence relation $\sim_D$ is given by
$$\pi \sim_D \pi'\ \Longleftrightarrow \ell_{\pi(j)} = \ell_{\pi'(j)} \text{ for all $j$.}$$
Accordingly, the elements of $M(D)$ correspond to all distinct permutations of $(\ell_1,\ell_2,\ldots,\ell_d)$, and we have
$$|M(D)| = \frac{d!}{\prod_{j \geq 0} m_j!}\,,$$
giving us the second part of the lemma.

\medskip

For the proof of the final part of the lemma, we merely need to show that the upper bound can be reached in the limit for a suitable sequence of vectors $(x_1,x_2,\ldots,x_d)$.  Note that under the given conditions, the function $Z_D$ is given by
$$Z_D(x_1,x_2,\ldots,x_d) =\frac{\sum_{\{i,j\}\subseteq [d]} \big(x_{i}x_{j}^{k-1} +x_{i}^{k-1} x_{j}\big)}{1-\sum_{i=1}^d x_i^k} =\frac{\big(\sum_{i=1}^d x_i \big) \big(\sum_{i=1}^d x_i^{k-1} \big) - \big(\sum_{i=1}^d x_i^k \big)}{1-\sum_{i=1}^d x_i^k}\,.$$
If we set $x_1 = x_2 = \cdots = x_{d-1} = \epsilon$ and $x_d = 1 - (d-1)\epsilon$, the numerator is readily seen to be $(d-1)\epsilon + O(\epsilon^2)$, while the denominator is $(d-1)k\epsilon + O(\epsilon^2)$. Hence we have
\begin{align*}
\lim_{\epsilon \to  0^+} Z_D(\epsilon,\epsilon,\ldots,\epsilon,1-(d-1)\epsilon) = \frac{1}{k}\,,
\end{align*}
which completes the proof of the lemma.
\end{proof}

\begin{proof}[Proof of Proposition~\ref{ind less prod ind}]
If $D$ has only two leaves, then we can assume that both $D_1$ and $D_2$ are single vertices, and all other branches empty. We have $I_d(D) = I_d(D_1) = I_d(D_2) = \cdots = I_d(D_d) = 1$. Moreover,
$$Z_D(x_1,x_2,\ldots,x_d) = \frac{\sum_{\{i,j\}\subseteq [d]} x_{i}x_{j}}{1-\sum_{i=1}^d x_i^2} = \frac{(\sum_{i=1}^d x_i)^2 - \sum_{i=1}^d x_i^2}{2(1-\sum_{i=1}^d x_i^2)}\,.$$
If $x_1+x_2+\cdots+x_d = 1$, this actually simplifies to $\frac12$, so the supremum in the inequality is $\frac12$, and we have equality.
So we can assume that $D$ has more than two leaves. As before, let us use the abbreviations $k = \|D\|$ and $\ell_i = \|D_i\|$. 
We know from the proof of Theorem~3 in~\cite{AudaceStephanPaper1} that
\begin{equation}\label{improveTo}
0\leq \max_{\substack{\|T\|=n \\T~\text{$d$-ary tree}}}\gamma(D,T) - I_d(D)\leq \frac{k(k-1)}{n}\,
\end{equation} 
for all $n \geq k$. Consider a sequence $T_1,T_2,\ldots$ of $d$-ary trees such that $\|T_n\| \to \infty$ as $n \to \infty$ and $c(D,T_n)$ is the maximum of $c(D,T)$ over all trees $T$ with the same number of leaves as $T_n$.
Denote the branches of $T_n$ by $T_{n,1},T_{n,2},\ldots,T_{n,d}$ (some of these branches are allowed to be empty). One can assume that $T_{n,1}$ is the branch of $T_n$ with the greatest number of leaves for every $n$. Set $\alpha_{n,i}:=\|T_{n,i}\|/\|T_n\|$ for every $i\in [d]$ and every $n$ (the proportion of leaves belonging to $T_{n,i}$), and set $\beta_n = 1 - \alpha_{n,1}$. We distinguish two cases based on whether $\beta_n$ is ``small'' or ``large'' in the limit.

\medskip

\textbf{Case 1}: Suppose that $\beta_n$ is bounded below by a positive constant $\delta$ as $n\to \infty$. We can assume that $\delta \leq \frac{1}{d}$. Note that $\beta_n$ is automatically bounded above by $\frac{d-1}{d}$ by definition. It follows that
\begin{equation}\label{eq:denom_estimate}
1- \sum_{i=1}^d \alpha_{n,i}^k \geq 1- \alpha_{n,1}^k - \Big(\sum_{i=2}^d \alpha_{n,i}\Big)^k 
= 1- (1-\beta_n)^k-\beta_n^k \geq 1- (1-\delta)^k-\delta^k
\end{equation}
for all $n$, since the function $x \mapsto 1-(1-x)^k-x^k$ is increasing for $x \in [0,\frac12]$ and decreasing for $x \in [\frac12,1]$. Now we apply recursion~\eqref{The general recursion}:
$$c(D,T_n)=\sum_{i=1}^d c(D,T_{n,i}) + \sum_{\pi \in M(D)}~\prod_{j=1}^d c(D_{\pi(j)},T_{n,j})\,.$$
In view of~\eqref{improveTo}, it gives us
\begin{align*}
I_d(D)  \binom{\|T_n\|}{k} \leq c(D,T_n) &\leq \sum_{i=1}^d \Big(I_d(D) + \frac{k(k-1)}{\|T_{n,i}\|} \Big)\binom{\|T_{n,i}\|}{k} \\
&\quad + \sum_{\pi \in M(D)}~\prod_{j=1}^d \Big(I_d(D_{\pi(j)}) + \frac{\ell_{\pi(j)}(\ell_{\pi(j)}-1)}{\|T_{n,j}\|} \Big)\binom{\|T_{n,j}\|}{\ell_{\pi(j)}}\,, 
\end{align*}
which implies that
\begin{align*}
I_d(D)  \binom{\|T_n\|}{k} &\leq \sum_{i=1}^d \Big( I_d(D) \frac{\|T_{n,i}\|^k}{k!} + \frac{\|T_{n,i}\|^{k-1}}{(k-2)!}\Big)\\
&\quad + \sum_{\pi \in M(D)}~\prod_{j=1}^d \Big( I_d(D_{\pi(j)}) \frac{\|T_{n,j}\|^{\ell_{\pi(j)}}}{\ell_{\pi(j)}!} + N(T_{n,j},D_{\pi(j)}) \Big)\,,
\end{align*}
where $N(T_{n,j},D_{\pi(j)})$ is equal to $\|T_{n,j}\|^{\ell_{\pi(j)}-1}/(\ell_{\pi(j)}-2)!$ if $\ell_{\pi(j)} \geq 2$, and $0$ otherwise. Consequently,
$$\Big(\|T_n\|^k  - \sum_{i=1}^d \|T_{n,i}\|^k \Big)I_d(D) \leq k! \sum_{\pi \in M(D)}~\prod_{j=1}^d I_d(D_{\pi(j)}) \frac{\|T_{n,j}\|^{\ell_{\pi(j)}}}{\ell_{\pi(j)}!} + \mathcal{O}(\|T_n\|^{k-1})$$
as $\|T_{n,j}\| < \|T_n\|$ for all $j \in [d]$ and all $n$. Dividing through by $\|T_n\|^k$, we get
$$	
\Big(1 - \sum_{i=1}^d \alpha_{n,i}^k \Big)I_d(D) \leq \frac{k!}{\ell_1! \ell_2! \cdots \ell_d!} \sum_{\pi \in M(D)}~\prod_{j=1}^d I_d(D_{\pi(j)}) \alpha_{n,j}^{\ell_{\pi(j)}}+ \mathcal{O}(\|T_n\|^{-1})\,.$$

Now using the fact that $1- \sum_{i=1}^d \alpha_{n,i}^k$ is bounded below by a positive constant as $n\to \infty$ by~\eqref{eq:denom_estimate}, we deduce that
\begin{align*}
I_d(D) &\leq \Big(\prod_{i=1}^d I_d(D_i)\Big) \binom{k}{\ell_1,\ell_2,\ldots,\ell_d} Z_D(\alpha_{n,1},\alpha_{n,2},\ldots,\alpha_{n,d}) + \mathcal{O}(\|T_n\|^{-1}) \\
&\leq \Big(\prod_{i=1}^d I_d(D_i)\Big) \binom{k}{\ell_1,\ell_2,\ldots,\ell_d} \sup_{\substack{0\leq x_1,x_2,\ldots,x_d<1 \\ x_1+x_2+\cdots + x_d=1}} Z_D(x_1,x_2,\ldots,x_d) + \mathcal{O}(\|T_n\|^{-1})\,.
\end{align*}
Finally, we take the limit as $n\to \infty$, giving us the desired result.

\medskip

\textbf{Case 2}: If $\beta_n$ is not bounded below by a positive constant, then we can assume (without loss of generality, by considering a subsequence if necessary)  that the limit of $\beta_n$ is actually $0$ as $n\to \infty$. Denote by $T_n \backslash T_{n,1}$ the tree that is obtained by removing the branch $T_{n,1}$ from $T_n$ (and possibly the root of $T_n$ if there is only one other nonempty branch).

\medskip

\emph{Claim 1:} We claim that the number of copies of $D$ in $T_n$ that involve more than one leaf of $T_n \backslash T_{n,1}$ is at most of order $\mathcal{O}(\beta_n^2 \|T_n\|^k)$.

\medskip

For the proof of the claim, note that by definition, the number of copies of $D$ in $T_n$ that involve more than one leaf of $T_n \backslash T_{n,1}$ is at most
\begin{align*}
\sum_{j=2}^k \binom{\|T_n\|-\|T_{n,1}\|}{j} \binom{\|T_{n,1}\|}{k-j} &\leq \sum_{j=2}^k  \frac{(\|T_n\|-\|T_{n,1}\|)^j \|T_{n,1}\|^{k-j} }{j! (k-j)!} \\
& = \|T_n\|^k (1-\alpha_{n,1})^2 \sum_{j=2}^k \frac{(1-\alpha_{n,1})^{j-2} \alpha_{n,1}^{k-j}}{j!(k-j)!}\\
&  \leq \|T_n\|^k  \beta_n^2 \sum_{j=2}^k \frac{1}{j!(k-j)!}\,. 
\end{align*}
This completes the proof of the claim. It follows that the proportion of copies of $D$ in $T_n$ that involve more than one leaf of $T_n \backslash T_{n,1}$ is of order at most $\mathcal{O}(\beta_n^2)$ among all subsets of $k$ leaves of $T_n$.

\medskip

\emph{Claim 2:} We further claim that $D$ must have only two nonempty branches, one of which is a single leaf.

\medskip

Indeed, suppose that $D$ does not have this shape. Then the subsets of leaves of $T_n$ that induce a copy of $D$ come in two types: either the $k$ leaves are all leaves of $T_{n,1}$, or more than one of the $k$ leaves is a leaf of $T_n \backslash T_{n,1}$. So this gives us
\begin{equation}\label{NotShape}
c(D,T_n) = c(D,T_{n,1}) + \mathcal{O}(\beta_n^2 \|T_n\|^k)
\end{equation}
by Claim~1. It was established in the proof of~\eqref{improveTo} (see \cite[Theorem 3]{AudaceStephanPaper1}) that 
$$0\leq \max_{\substack{\|T^{\prime}\|=j \\T^{\prime}~\text{$d$-ary tree}}}\gamma(D,T^{\prime}) -\max_{\substack{\|T^{\prime \prime}\|=j+1 \\T^{\prime \prime}~\text{$d$-ary tree}}}\gamma(D,T^{\prime \prime})\leq \frac{k(k-1)}{j(j+1)}\,.$$
Summing all these inequalities for $j=m,m+1,\ldots,n-1$, we find that
$$0\leq \max_{\substack{\|T^{\prime}\|=m \\T^{\prime}~\text{$d$-ary tree}}}\gamma(D,T^{\prime}) -\max_{\substack{\|T^{\prime \prime}\|=n \\T^{\prime \prime}~\text{$d$-ary tree}}}\gamma(D,T^{\prime \prime}) \leq k(k-1)\Big(\frac{1}{m}-\frac{1}{n}\Big)\,.$$
Thus we have
$$\max_{\substack{\|T^{\prime}\|=m \\T^{\prime}~\text{$d$-ary tree}}}\gamma(D,T^{\prime}) -\max_{\substack{\|T^{\prime \prime}\|=n \\T^{\prime \prime}~\text{$d$-ary tree}}}\gamma(D,T^{\prime \prime})=\mathcal{O}\Big(\frac{n-m}{mn}\Big)$$
as $m\leq n$ and $m \to \infty$. In particular, since $T_n$ was assumed to contain the maximum number of copies of $D$ among all trees of the same size,
\begin{equation}\label{eq:change}
\gamma(D,T_{n,1}) - \gamma(D,T_n) \leq \max_{\substack{\|T^{\prime}\|=\|T_{n,1}\| \\T^{\prime}~\text{$d$-ary tree}}}\gamma(D,T^{\prime}) - \gamma(D,T_n) = \mathcal{O}\Big(\frac{\|T_n\|-\|T_{n,1}\|}{\|T_n\|\cdot \|T_{n,1}\|}\Big)\,.
\end{equation}
Using \eqref{eq:change}, formula~\eqref{NotShape} implies that
$$c(D,T_n) \leq \frac{\binom{\|T_{n,1}\|}{k}}{\binom{\|T_n\|}{k}}c(D,T_n) +  \mathcal{O}\Big(\|T_{n,1}\|^k \cdot \frac{\|T_n\|-\|T_{n,1}\|}{\|T_n\|\cdot \|T_{n,1}\|} + \beta_n^2 \|T_n\|^k \Big)\,.$$
Thus
$$\Big(1-\frac{\binom{\|T_{n,1}\|}{k}}{\binom{\|T_n\|}{k}} \Big)c(D,T_n) \leq  \mathcal{O}\big(\beta_n \|T_{n,1}\|^{k-1}  + \beta_n^2 \|T_n\|^k\big)\,,$$
and using the asymptotic formula
\begin{equation}\label{asymForm}
\binom{\|T_n\|}{k} - \binom{\|T_{n,1}\|}{k} \sim (\|T_n\|-\|T_{n,1}\|) \frac{\|T_n\|^{k-1}}{(k-1)!} = \frac{\|T_n\|^k \beta_n}{(k-1)!}\,,
\end{equation}
which holds since $\|T_n\| \sim \|T_{n,1}\|$, we derive that
\begin{align*}
\gamma(D,T_n) \leq  \mathcal{O}(\|T_n\|^{-1} + \beta_n)\,.
\end{align*}
Therefore
\begin{align*}
I_d(D)=\lim_{n \to \infty} \gamma(D,T_n) \leq 0
\end{align*}
as $\lim_{n \to \infty} \beta_n =0$. This contradicts the fact that $I_d(D)$ is strictly positive (which was mentioned in the introduction and also follows from Theorem~\ref{Completeproportion}). Thus the proof of our second claim is complete.

\medskip

Now we can assume that $D$ has only two nonempty branches, one of which ($D_1$, say) is the tree that has only one vertex. Since we are assuming that $k>2$, the second nonempty branch $D_2$ of $D$ has at least two leaves. Using Claim~1, we get
$$c(D,T_n) = c(D,T_{n,1}) + (\|T_n\|-\|T_{n,1}\|)c(D_2,T_{n,1}) + \mathcal{O}(\beta_n^2 \|T_n\|^k)\,.$$
Following the same course of reasoning used to prove Claim~2, it is not difficult to see that
$$\Big(1-\frac{\binom{\|T_{n,1}\|}{k}}{\binom{\|T_n\|}{k}} \Big)c(D,T_n) \leq  (\|T_n\|-\|T_{n,1}\|)\binom{\|T_{n,1}\|}{k-1} \gamma(D_2,T_{n,1})+ \mathcal{O}\big( \beta_n \|T_n\|^{k-1} + \beta_n^2 \|T_n\|^k \big)\,.$$
It follows from the asymptotic formula~\eqref{asymForm} now that
$$\gamma(D,T_n) - \gamma(D_2,T_{n,1}) \leq  \mathcal{O}(\|T_n\|^{-1} + \beta_n)\,.$$
Applying $\liminf$ to both sides of this inequality, we get
$$I_d(D) - \limsup_{n\to \infty} \gamma(D_2,T_{n,1}) = \liminf_{n \to \infty} \big( \gamma(D,T_n) -  \gamma(D_2,T_{n,1})\big) \leq 0\,,$$
which implies that
$$I_d(D) \leq \limsup_{n\to \infty} \gamma(D_2,T_{n,1}) \leq I_d(D_2)\,.$$
This completes the proof of the proposition once we invoke the final part of Lemma~\ref{lem:supGen}.
\end{proof}
The following corollary is a direct consequence of Proposition~\ref{ind less prod ind} combined with the first and second part of Lemma~\ref{lem:supGen}:
\begin{corollary}\label{indA}
Let $D$ be a $d$-ary tree, and let $D_1,D_2,\ldots,D_d$ be its branches (some of which might be empty). Moreover, let $m_{j}$ be the number of branches with $j$ leaves for every $j \geq 0$. Then we have
$$I_d(D)\leq \frac{|M(D)| \prod_{j \geq 0} m_j!}{d!} \prod_{i=1}^d I_d(D_i)\,.$$
If branches with the same number of leaves are isomorphic, then this reduces to
$$I_d(D)\leq \prod_{i=1}^d I_d(D_i)\,.$$
\end{corollary}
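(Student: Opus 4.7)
The plan is to simply chain together the two results we already have. Proposition~\ref{ind less prod ind} bounds $I_d(D)$ by a product of the inducibilities of its branches, a multinomial coefficient, and the supremum of $Z_D$ over the probability simplex. Lemma~\ref{lem:supGen} (first part) provides an upper bound on that supremum whose right-hand side contains the \emph{reciprocal} of exactly the same multinomial coefficient, so the two multinomial factors will cancel cleanly when we substitute.

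Concretely, I would start from
$$I_d(D)\leq \binom{\|D\|}{\|D_1\|,\ldots,\|D_d\|}\Big(\prod_{i=1}^d I_d(D_i)\Big)\sup_{\substack{0\leq x_1,\ldots,x_d<1\\ x_1+\cdots+x_d=1}}Z_D(x_1,\ldots,x_d)$$
given by Proposition~\ref{ind less prod ind}, and then insert the estimate
$$\sup_{\substack{0\leq x_1,\ldots,x_d<1\\ x_1+\cdots+x_d=1}}Z_D(x_1,\ldots,x_d)\leq \frac{|M(D)|\prod_{j\geq 0}m_j!}{d!}\binom{\|D\|}{\|D_1\|,\ldots,\|D_d\|}^{-1}$$
from the first part of Lemma~\ref{lem:supGen}. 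The two multinomial coefficients cancel, yielding the first claimed inequality
$$I_d(D)\leq \frac{|M(D)|\prod_{j\geq 0}m_j!}{d!}\prod_{i=1}^d I_d(D_i).$$

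For the second statement, under the hypothesis that branches with the same number of leaves are isomorphic, the second part of Lemma~\ref{lem:supGen} upgrades the bound on $\sup Z_D$ to $\binom{\|D\|}{\|D_1\|,\ldots,\|D_d\|}^{-1}$. Substituting again into Proposition~\ref{ind less prod ind}, the multinomial factor cancels and we obtain $I_d(D)\leq \prod_{i=1}^d I_d(D_i)$ as required. Alternatively, one can note that in this isomorphism case Lemma~\ref{lem:supGen} gives $|M(D)|=d!/\prod_{j\geq 0}m_j!$, so the constant $|M(D)|\prod_{j\geq 0}m_j!/d!$ in the first inequality collapses to $1$; either viewpoint yields the same conclusion.

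There is really no main obstacle here: both ingredients have already been proven, and the corollary is just a mechanical substitution. The only thing to be careful about is matching the constraint $x_1+\cdots+x_d=1$ (which appears in both the proposition and the lemma) and verifying that the ``isomorphic branches of equal size'' hypothesis in Corollary~\ref{indA} is exactly the hypothesis invoked in the second part of Lemma~\ref{lem:supGen}, so that the two statements compose without any extra work.
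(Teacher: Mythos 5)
Your proof is correct and matches the paper's argument exactly: the paper also derives Corollary~\ref{indA} by substituting the bounds from the first and second parts of Lemma~\ref{lem:supGen} into Proposition~\ref{ind less prod ind}, with the multinomial coefficients cancelling.
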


Further improvements rely on our ability to determine (or estimate) the supremum over the function $Z_D$ occurring in Proposition~\ref{ind less prod ind}. This will be achieved for a special class of trees in the following section. 

\section{Balanced trees}\label{Sec:balance}

Recall that $\eta_d(D)$, as defined in Theorem~\ref{Completeproportion}, provides a lower bound on the inducibility: $\eta_d(D) \leq I_d(D)$. Simple instances where equality holds are the empty tree or trees with only one or two leaves. It turns out that there are many more such cases, which is a consequence of the following theorem:

\begin{theorem}\label{thm:supcond}
Let $D$ be a $d$-ary tree with branches $D_1,D_2,\ldots,D_d$ (some of which may be empty). If $ I_d(D_i) = \eta_d(D_i)$ for all branches and the supremum of $Z_D(x_1,x_2,\ldots,x_d)$ under the conditions $0 \leq x_i < 1$ and $x_1+x_2+\cdots+x_d = 1$ is attained when $x_1 = x_2 = \cdots = x_d = \frac{1}{d}$, i.e.,
$$\sup_{\substack{0\leq x_1,x_2,\ldots,x_d<1\\ x_1+x_2+\cdots +x_d=1}} 	Z_D(x_1,x_2,\ldots,x_d) = Z_D \Big(\frac1{d},\frac1{d},\ldots,\frac1{d}\Big) = \frac{|M(D)|}{d^{\|D\|}-d}\,,$$
then we also have $I_d(D) = \eta_d(D)$.
\end{theorem}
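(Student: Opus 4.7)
The plan is to combine the lower bound from Theorem~\ref{Completeproportion} with the upper bound from Proposition~\ref{ind less prod ind}, and observe that the two hypotheses force these bounds to match. Since $\eta_d(D) \leq I_d(D)$ holds unconditionally (by the remark after the proof of Theorem~\ref{Completeproportion}, as $H_n^d$ is a particular sequence of strictly $d$-ary trees), it suffices to prove the reverse inequality $I_d(D) \leq \eta_d(D)$.

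The main step is to invoke Proposition~\ref{ind less prod ind}, which gives
$$I_d(D) \leq \binom{\|D\|}{\|D_1\|,\|D_2\|,\ldots,\|D_d\|} \Bigl(\prod_{i=1}^d I_d(D_i)\Bigr) \sup_{\substack{0\leq x_1,\ldots,x_d<1 \\ x_1+\cdots+x_d=1}} Z_D(x_1,x_2,\ldots,x_d).$$
Now substitute the two hypotheses. First, by assumption, $I_d(D_i) = \eta_d(D_i)$ for every branch $D_i$, so $\prod_{i=1}^d I_d(D_i) = \prod_{i=1}^d \eta_d(D_i)$. Second, the supremum equals $|M(D)|/(d^{\|D\|}-d)$. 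Plugging these in yields
$$I_d(D) \leq \binom{\|D\|}{\|D_1\|,\|D_2\|,\ldots,\|D_d\|} \frac{|M(D)|}{d^{\|D\|}-d} \prod_{i=1}^d \eta_d(D_i).$$

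The right-hand side is exactly the recursive expression for $\eta_d(D)$ given by Theorem~\ref{Completeproportion}. Hence $I_d(D) \leq \eta_d(D)$, and combined with the lower bound we conclude $I_d(D) = \eta_d(D)$. There is no genuine obstacle here: all the work has been done in establishing Theorem~\ref{Completeproportion} and Proposition~\ref{ind less prod ind}, and the statement is essentially a clean corollary once the two hypotheses are recognized as precisely the conditions that collapse the upper bound onto the lower bound. The only care needed is in the trivial degenerate cases (when $D$ is empty or has one leaf), where $\eta_d(D) = I_d(D) = 1$ holds directly, and when $D$ has two leaves, where both sides equal $1$ as well; these can be disposed of at the outset so that the recursion for $\eta_d(D)$ and the inequality from Proposition~\ref{ind less prod ind} apply cleanly.
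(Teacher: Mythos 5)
Your proposal is correct and follows essentially the same route as the paper: combine the recursion for $\eta_d(D)$ from Theorem~\ref{Completeproportion} with the upper bound of Proposition~\ref{ind less prod ind}, use the two hypotheses to replace $I_d(D_i)$ by $\eta_d(D_i)$ and the supremum by $|M(D)|/(d^{\|D\|}-d)$, and sandwich $I_d(D)$ between $\eta_d(D)$ and $\eta_d(D)$. Your extra remark about the degenerate small cases is harmless but not needed.
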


\begin{proof}
Let us compare the recursion for $\eta_d$,
\begin{equation}\label{eq:supcond1}
\eta_d(D) =  \binom{\|D\|}{\|D_1\|,\|D_2\|,\ldots,\|D_d\|} \frac{|M(D)|}{d^{\|D\|}-d} \prod_{i=1}^d \eta_d(D_i)\,,
\end{equation}
to the upper bound in Proposition~\ref{ind less prod ind}:
\begin{equation}\label{eq:supcond2}
I_d(D)\leq \binom{\|D\|}{\|D_1\|,\|D_2\|,\ldots,\|D_d\|} \Big(\prod_{i=1}^d I_d(D_i) \Big) \sup_{\substack{0\leq x_1,x_2,\ldots,x_d<1\\ x_1+x_2+\cdots +x_d=1}} 	Z_D(x_1,x_2,\ldots,x_d)\,.
\end{equation}
The similarities are obvious. Plugging $x_1 = x_2 = \cdots = x_d = \frac{1}{d}$ into the representation~\eqref{eq:ZD_alt}, we obtain
$$Z_D\Big(\frac1{d},\frac1{d},\ldots,\frac1{d}\Big) = \frac{1}{1-\sum_{i=1}^d d^{-\|D\|}} \cdot \frac{|M(D)|}{d!} \sum_{\pi \in S_d} d^{-\|D\|} = \frac{|M(D)|}{d^{\|D\|}-d}\,.$$
Thus we can combine~\eqref{eq:supcond1} and~\eqref{eq:supcond2}, giving us
\begin{align*}
\eta_d(D) \leq I_d(D) &\leq \binom{\|D\|}{\|D_1\|,\|D_2\|,\ldots,\|D_d\|} \frac{|M(D)|}{d^{\|D\|}-d} \prod_{i=1}^d I_d(D_i) \\
&= \binom{\|D\|}{\|D_1\|,\|D_2\|,\ldots,\|D_d\|} \frac{|M(D)|}{d^{\|D\|}-d} \prod_{i=1}^d \eta_d(D_i) = \eta_d(D)\,,
\end{align*}
which implies that $I_d(D) = \eta_d(D)$.
\end{proof}

Let us now define a class of trees for which the supremum condition of Theorem~\ref{thm:supcond} is satisfied. A \emph{balanced} $d$-ary tree is a $d$-ary tree whose branches $D_1,D_2,\ldots,D_d$ (some of which may be empty) satisfy $\big| \| D_i \| - \|D_j\| \big| \leq 1$ for all $i,j$, i.e., the number of leaves in two different branches differs at most by one. In particular, this means that a balanced $d$-ary tree is either a star or has root degree $d$. 
Figure~\ref{1newbounevenf} shows an example of a balanced $4$-ary tree.
\begin{figure}[htbp]\centering 
\begin{tikzpicture}[thick,level distance=11mm, scale=0.85, transform shape]
\tikzstyle{level 1}=[sibling distance=30mm]   
\tikzstyle{level 2}=[sibling distance=10mm]
\tikzstyle{level 3}=[sibling distance=5mm]
\node [circle,draw]{}
child {child {child {[fill] circle (2pt)}child {[fill] circle (2pt)}child {[fill] circle (2pt)}child {[fill] circle (2pt)}}child {[fill] circle (2pt)}}
child {child {[fill] circle (2pt)}child {[fill] circle (2pt)}child {child {[fill] circle (2pt)}child {[fill] circle (2pt)}}child {[fill] circle (2pt)}}
child {child {child {[fill] circle (2pt)}child {[fill] circle (2pt)}}child {child {[fill] circle (2pt)}child {[fill] circle (2pt)}}}
child {child {[fill] circle (2pt)}child {[fill] circle (2pt)}child {child {[fill] circle (2pt)}child {[fill] circle (2pt)}}};
\end{tikzpicture}
\caption{A balanced $4$-ary tree.}\label{1newbounevenf}
\end{figure}
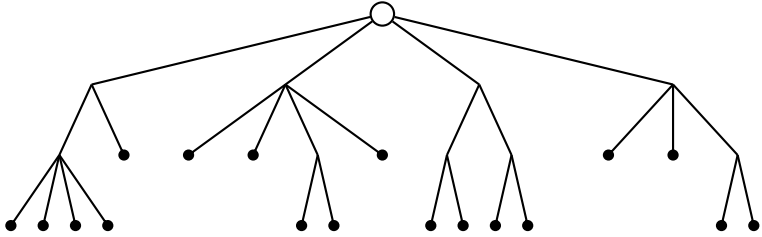

It turns out that balanced $d$-ary trees always satisfy the supremum condition of Theorem~\ref{thm:supcond}. Among other things, this will imply Theorem~\ref{IndEvendAryTree}.

\begin{lemma}\label{EvenDTlema}
For every balanced $d$-ary tree $D$ with branches $D_1,D_2,\ldots,D_d$ (some of which may be empty), we have
$$\sup_{\substack{0\leq x_1,x_2,\ldots,x_d<1\\ x_1+x_2+\cdots +x_d=1}}
	Z_{D}(x_1,x_2,\ldots,x_d) = \frac{|M(D)|}{d^{\|D\|}-d}\,.$$
\end{lemma}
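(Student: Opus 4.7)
The plan is to reduce the lemma to a symmetric polynomial inequality on the simplex and then derive it from Muirhead's inequality applied term by term.

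Write $k=\|D\|$ as $k=ds+a$ with $0\le a\le d-1$, so that the balanced tree $D$ has $a$ branches of size $s+1$ and $d-a$ branches of size $s$. Every monomial $\prod_j x_j^{\|D_{\pi(j)}\|}$ with $\pi\in S_d$ is a permutation of $x_1^{s+1}\cdots x_a^{s+1}x_{a+1}^s\cdots x_d^s$ and appears exactly $a!(d-a)!$ times in the sum over $S_d$. Using~\eqref{eq:ZD_alt}, this gives
$$Z_D(x_1,\ldots,x_d)=\frac{|M(D)|\,a!(d-a)!}{d!}\cdot\frac{m_{P_0}(x)}{1-\sum_{i=1}^d x_i^k},$$
where $P_0=((s+1)^a, s^{d-a})$ is the partition encoding the branch sizes and $m_{P_0}$ is its monomial symmetric polynomial. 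A direct evaluation yields $Z_D(1/d,\ldots,1/d) = |M(D)|/(d^k-d)$, and the lemma reduces to the polynomial inequality
$$(d^k-d)\,m_{P_0}(x)\ \le\ \binom{d}{a}\Big(1-\sum_{i=1}^d x_i^k\Big)$$
on the simplex $x_i\ge 0$, $\sum_{i=1}^d x_i = 1$.

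Next, I would expand $1=(x_1+\cdots+x_d)^k$ via the multinomial theorem and group terms by the underlying partition $P$ of $k$ with at most $d$ parts, yielding $1-\sum_i x_i^k=\sum_{P\ne (k)}\binom{k}{P}\,m_P(x)$; here $\binom{k}{P}$ is the multinomial coefficient common to all permutations of $P$ padded with zeros to length $d$. The orbit--stabilizer identity $d^k=\sum_P \binom{k}{P}\cdot d!/\mathrm{stab}(P)$, together with $\mathrm{stab}(P_0)=a!(d-a)!$ and $\mathrm{stab}((k))=(d-1)!$, lets us isolate the $P_0$ contributions on both sides of the target inequality and rewrite it equivalently as
$$\sum_{P\ne(k),\,P_0}\binom{k}{P}\cdot\frac{d!}{\mathrm{stab}(P)}\,m_{P_0}(x)\ \le\ \binom{d}{a}\sum_{P\ne(k),\,P_0}\binom{k}{P}\,m_P(x).$$

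The decisive step is Muirhead's inequality. The key observation is that the balanced partition $P_0$ is the minimum in the majorization order among all partitions of $k$ with at most $d$ parts---a fact following from repeatedly transferring one unit from a larger part to a smaller one. Hence every other such $P$ satisfies $P\succ P_0$, and Muirhead gives $\mathrm{stab}(P)\,m_P(x)\ge \mathrm{stab}(P_0)\,m_{P_0}(x) = a!(d-a)!\,m_{P_0}(x)$ for all $x\ge 0$. Multiplying through by $\binom{k}{P}/\mathrm{stab}(P)$ and using $\binom{d}{a}=d!/(a!(d-a)!)$ shows that the displayed inequality holds \emph{term by term}, which completes the proof. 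The main obstacle is not conceptual but combinatorial bookkeeping: one has to check that the constants from Muirhead, the stabilizer sizes, and the multinomial coefficients line up exactly---which, as the algebra shows, they do. Equality propagates through every step at $x_1=\cdots=x_d=1/d$, confirming that the supremum is attained precisely at the centroid.
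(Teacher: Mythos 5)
Your proof is correct and follows essentially the same route as the paper's: expand $(x_1+\cdots+x_d)^{\|D\|}-\sum_i x_i^{\|D\|}$ by the multinomial theorem, use that the balanced exponent vector is majorization-minimal among nonnegative integer vectors with the same sum, and apply Muirhead's inequality, with equality checked at $x_1=\cdots=x_d=\tfrac1d$; your orbit--stabilizer bookkeeping just makes explicit the constant $\tfrac{d^{\|D\|}-d}{d!}$ that the paper obtains by summing the multinomial coefficients. (Only your final parenthetical claim that the supremum is attained \emph{only} at the centroid is not justified -- and indeed fails for $\|D\|=2$, where $Z_D\equiv\tfrac12$ on the simplex -- but this is immaterial to the lemma.)
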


In the proof of this lemma, we rely on \emph{Muirhead's inequality} (see~\cite[p.~44-45]{hardy1952inequalities}). Let $A = (a_1,a_2,\ldots, a_d)$ and $B = (b_1, b_2, \ldots, b_d)$ be vectors of real numbers with $a_1\geq a_2\geq \cdots \geq a_d$ and $b_1\geq b_2\geq \cdots \geq b_d$. We say that the vector $A$ \emph{majorizes} the vector $B$ if $\sum_{i=1}^d a_i= \sum_{i=1}^d b_i$ and for every $j\in \{1,2,\ldots,d-1\}$, 
$$\sum_{i=1}^j a_i \geq  \sum_{i=1}^j b_i\,.$$
Muirhead's inequality states that for nonnegative real numbers $x_1,x_2,\ldots,x_d$, we have
$$\sum_{\pi \in S_d} \prod_{i=1}^d x_{i}^{a_{\pi(i)}} \geq \sum_{\pi \in S_d} \prod_{i=1}^d x_{i}^{b_{\pi(i)}}$$
if $A$ majorizes $B$, the sum being over all permutations of $[d] = \{1,2,\ldots,d\}$. For our purposes, the following special case is particularly relevant: let $k$ be a positive integer, and write it as $k = ds+b$, with $b \in \{0,1,\ldots,d-1\}$. It is not difficult to see that the vector $(s+1,\ldots,s+1,s,\ldots,s)$ ($b$ copies of $s+1$, followed by $d-b$ copies of $s$) is majorized by all other vectors of $d$ nonnegative integers with sum $k$.
Let us now get to the proof of Lemma~\ref{EvenDTlema}.

\begin{proof}[Proof of Lemma~\ref{EvenDTlema}]
Let us write $k = \|D\|$ as in previous proofs. Since $D$ is balanced, there exists a positive integer $s$ such that each branch of $D$ contains either $s$ or $s+1$ leaves. Writing $k = d s+b$, where $b \in \{0,1,\ldots,d-1\}$, we have $b$ branches with $s+1$ leaves, and $d-b$ branches with $s$ leaves.

We write the function $Z_D$ according to~\eqref{eq:ZD_alt} as
$$Z_D(x_1,x_2,\ldots,x_d) =\frac{1}{1-\sum_{i=1}^d x_i^k} \cdot \frac{|M(D)|}{d!} \sum_{\pi \in S_d}~\prod_{j=1}^d x_{j}^{\|D_{\pi(j)}\|}\,.$$
As mentioned before, the vector of branch sizes $(\|D_1\|,\|D_2\|,\ldots,\|D_d\|) = (s+1,\ldots,s+1,s,\ldots,s)$ (without loss of generality in decreasing order) is majorized by all other ordered nonnegative integer vectors of the same length and sum. We expand
$$\Big( \sum_{i=1}^d x_i \Big)^k -\sum_{i=1}^d x_i^k$$
by means of the multinomial theorem and group the terms according to the vector of exponents. Each of the resulting groups has the form
$$C \sum_{\pi \in S_d} ~\prod_{j=1}^d x_{j}^{a_{\pi(j)}}$$
for a suitable constant $C$ and a vector $(a_1,a_2,\ldots,a_d)$ whose sum of entries is $k$. Since each vector $(a_1,a_2,\ldots,a_d)$ majorizes $(\|D_1\|,\|D_2\|,\ldots,\|D_d\|)$, we can apply Muirhead's theorem repeatedly to obtain
$$\Big( \sum_{i=1}^d x_i \Big)^k -\sum_{i=1}^d x_i^k \geq \frac{d^k - d}{d!} \sum_{\pi \in S_d}~\prod_{j=1}^d x_{j}^{\|D_{\pi(j)}\|}\,.$$
If the sum of the $x_i$s is equal to $1$, then this immediately yields
$$Z_D(x_1,x_2,\ldots,x_d) =\frac{1}{1-\sum_{i=1}^d x_i^k} \cdot \frac{|M(D)|}{d!} \sum_{\pi \in S_d}~\prod_{j=1}^d x_{j}^{\|D_{\pi(j)}\|} \leq \frac{|M(D)|}{d^k-d}\,.$$
Equality holds in Muirhead's inequality when all the $x_i$s are equal, so this is also the case for our inequality: when $x_1 = x_2 = \cdots = x_d = \frac{1}{d}$, the upper bound is attained (as we have already seen in the proof of Theorem~\ref{thm:supcond}). This proves the lemma.
\end{proof}

The following theorem is now straightforward.

\begin{theorem}\label{upperonBalnce}
For a balanced $d$-ary tree $D$ with branches $D_1,D_2,\ldots,D_d$ (some of which may be empty), the inequality
$$I_d(D) \leq \frac{|M(D)|}{d^{\|D\|}-d} \binom{\|D\|}{\|D_1\|,\|D_2\|,\ldots,\|D_d\|} \prod_{i=1}^d I_d(D_i)$$ 
holds for every $d$.
Furthermore, if $I_d(D_i) = \eta_d(D_i)$ for all $i$, then we also have $I_d(D) = \eta_d(D)$.
\end{theorem}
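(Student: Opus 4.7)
The plan is to recognize that this theorem is a direct corollary of results already established in the excerpt, and to show that essentially no new work is needed beyond assembling them correctly.

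For the first inequality, I would start from Proposition~\ref{ind less prod ind}, which bounds $I_d(D)$ from above by
$$\binom{\|D\|}{\|D_1\|,\|D_2\|,\ldots,\|D_d\|} \Big(\prod_{i=1}^d I_d(D_i)\Big) \sup_{\substack{0\leq x_1,\ldots,x_d<1\\ x_1+\cdots+x_d=1}} Z_D(x_1,\ldots,x_d)\,.$$
Since $D$ is balanced, Lemma~\ref{EvenDTlema} tells us that this supremum equals exactly $\frac{|M(D)|}{d^{\|D\|}-d}$. Substituting this value into the bound from Proposition~\ref{ind less prod ind} immediately yields the stated inequality
$$I_d(D) \leq \frac{|M(D)|}{d^{\|D\|}-d} \binom{\|D\|}{\|D_1\|,\|D_2\|,\ldots,\|D_d\|} \prod_{i=1}^d I_d(D_i)\,.$$

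For the second statement, I would apply Theorem~\ref{thm:supcond} directly. That theorem has two hypotheses: first, that $I_d(D_i) = \eta_d(D_i)$ for all branches $D_i$ (which is assumed here), and second, that the supremum of $Z_D$ over the simplex is attained at the symmetric point $x_1 = \cdots = x_d = 1/d$ and equals $\frac{|M(D)|}{d^{\|D\|}-d}$. The second hypothesis is precisely the content of Lemma~\ref{EvenDTlema} in the balanced case. Therefore both hypotheses of Theorem~\ref{thm:supcond} are met, and the conclusion $I_d(D) = \eta_d(D)$ follows.

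There is no real obstacle here; the work has been done upstream. The only thing to note carefully is that Lemma~\ref{EvenDTlema} not merely produces the correct numerical value of the supremum but actually shows (via Muirhead's inequality with equality in the symmetric case) that it is attained at $(1/d, \ldots, 1/d)$, which is exactly the form in which Theorem~\ref{thm:supcond} requires the supremum condition to hold. Once this is observed, both parts of the theorem are immediate.
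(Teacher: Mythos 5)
Your proposal is correct and matches the paper's own proof exactly: the first inequality is obtained by combining Proposition~\ref{ind less prod ind} with Lemma~\ref{EvenDTlema}, and the equality statement follows from Theorem~\ref{thm:supcond} together with Lemma~\ref{EvenDTlema}. Your additional remark that the lemma establishes attainment of the supremum at the symmetric point (as required by Theorem~\ref{thm:supcond}) is a correct and careful observation.
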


\begin{proof}
The first part is a consequence of Proposition~\ref{ind less prod ind} and Lemma~\ref{EvenDTlema}. The second part
follows from Theorem~\ref{thm:supcond} together with Lemma~\ref{EvenDTlema}.
\end{proof}

The upper bound in Theorem~\ref{upperonBalnce} can be extended to trees which have fewer than $d$ nonempty branches, but are otherwise balanced (i.e., the number of leaves in any two nonempty branches differs at most by $1$). In this case, the same approach yields the inequality
$$I_d(D) \leq \frac{|M(D)|}{\Sigma(D)} \binom{\|D\|}{\|D_1\|,\|D_2\|,\ldots,\|D_d\|} \prod_{i=1}^d I_d(D_i)\,,$$
where $\Sigma(D)$ is defined as follows: let $r$ be the number of nonempty branches of $D$, and let $V(D)$ be the set of all vectors $(k_1,k_2,\ldots,k_d)$ with $0 \leq k_i < \|D\|$ for all $i$, $k_1+k_2+\cdots+k_d = \|D\|$, and at least $d-r$ of the entries $k_i$ equal to $0$. Then
$$\Sigma(D) = \sum_{(k_1,k_2,\ldots,k_d) \in V(D)} \binom{\|D\|}{k_1,k_2,\ldots,k_d}\,.$$
Let us now look at an application of Theorem~\ref{upperonBalnce}. Since even trees are balanced, Theorem~\ref{IndEvendAryTree} follows immediately.

\begin{proof}[Proof of Theorem~\ref{IndEvendAryTree}]
Let us define $E_0^d$ to be the empty tree, which is consistent with the recursive definition. Setting $c_k = I_d(E_k^d)/k!$, we have $c_0 = c_1 = 1$. By Theorem~\ref{upperonBalnce} and the definition of even trees, we have
$$c_{ds+b} = \frac{I_d(E_{ds+b}^d)}{(ds+b)!} = \frac{|M(E_{ds+b}^d)|}{d^{ds+b}-d} \Big( \frac{I_d(E_s^d)}{s!} \Big)^{d-b} \Big( \frac{I_d(E_{s+1}^d)}{(s+1)!} \Big)^{b}\,.$$
The even tree $E_{ds+b}^d$ has two types of branches ($E_s^d$ and $E_{s+1}^d$), occurring $d-b$ and $b$ times respectively. Thus the elements of $M(E_{ds+b}^d)$ correspond precisely to the $b$-element subsets of~$[d]$. So $|M(E_{ds+b}^d)| = \binom{d}{b}$, and it follows that
$$c_{ds+b} = \frac{\binom{d}{b}}{d^{ds+b}-d} c_s^{d-b}c_{s+1}^b\,,$$
which is precisely the recursion stated in Theorem~\ref{IndEvendAryTree}.
\end{proof}

Looking at small instances, we find evidence that the even $d$-ary tree $E^d_n$ always has the greatest number of copies of the tree $E^d_k$ over all $n$-leaf $d$-ary trees:
\begin{conjecture}
Let $d\geq 2$ and $k\geq 1$ be two fixed positive integers. Then we have
$$\max_{\substack{\|T\|=n \\T~\text{$d$-ary tree}}}c(E^d_k,T)=c(E^d_k,E^d_n)$$
for every $n \geq 1$.
\end{conjecture}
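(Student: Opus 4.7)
The natural strategy is strong induction on $n$. The base case $n = 1$ is trivial, and for $n \leq k$ the inequality reduces to $c(E^d_k,T) \leq c(E^d_k,E^d_n)$, where the right-hand side equals $0$ for $n<k$ and $1$ for $n=k$ (with equality iff $T \cong E^d_k$).

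For the induction step, assume the claim holds for all $n' < n$, and let $T$ be a $d$-ary tree with $\|T\|=n$ whose branches are $T_1,\ldots,T_d$ (some possibly empty). Set $n_i = \|T_i\|$. The recursion~\eqref{The general recursion} yields
$$c(E^d_k, T) = \sum_{i=1}^d c(E^d_k, T_i) + \sum_{\pi \in M(E^d_k)}\prod_{j=1}^d c\bigl((E^d_k)_{\pi(j)}, T_j\bigr).$$
Since the branches of $E^d_k$ are themselves even trees $E^d_s$ or $E^d_{s+1}$ (with $k=ds+b$), and since every $n_i<n$ (the root of a $d$-ary tree has degree at least $2$), the induction hypothesis applies to every term on the right, giving
$$c(E^d_k,T_i)\leq c(E^d_k,E^d_{n_i})\quad\text{and}\quad c\bigl((E^d_k)_{\pi(j)},T_j\bigr)\leq c\bigl((E^d_k)_{\pi(j)},E^d_{n_j}\bigr),$$
with equality when each $T_i$ is replaced by $E^d_{n_i}$. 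The problem therefore reduces to proving that the function
$$f(n_1,\ldots,n_d):=\sum_{i=1}^d c(E^d_k, E^d_{n_i})+\sum_{\pi \in M(E^d_k)}\prod_{j=1}^d c\bigl((E^d_k)_{\pi(j)},E^d_{n_j}\bigr),$$
defined on nonnegative integer tuples with $n_1+\cdots+n_d=n$ and at least two positive entries, attains its maximum at the most balanced composition, which is exactly the composition encoded by the root of $E^d_n$.

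The main obstacle is this discrete optimization. The plan is a smoothing (balancing-exchange) argument: if some pair $n_i,n_j$ satisfies $n_i<n_j-1$, then replacing $(n_i,n_j)$ with $(n_i+1,n_j-1)$ does not decrease $f$. Because $f$ is built from the integer sequences $a_m:=c(E^d_s,E^d_m)$ and $a'_m:=c(E^d_{s+1},E^d_m)$, the exchange inequality amounts to a log-concavity / supermodularity statement for these sequences (together with an analogous statement for $c(E^d_k,E^d_m)$ itself, which dominates the within-branch sum). I would attack this by first deriving a clean closed form (or at least a recursion indexed by the base-$d$ digits of $m$) for $c(E^d_s,E^d_m)$ from the recursive definition of $E^d_m$ and~\eqref{The general recursion}, and then checking the required log-concavity by induction on the depth of the trees involved. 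A minor secondary issue is to rule out root degrees strictly less than $d$ when $n>d$; this is handled by the same exchange argument, trading a zero entry against a sufficiently large positive one. Once smoothing is established, iterating it drives any composition of $n$ into the balanced form underlying $E^d_n$, closing the induction.
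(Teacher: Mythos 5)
This statement is not proved in the paper at all: it is stated as an open \emph{conjecture}, supported only by checking small instances, so there is no argument of the paper's to compare yours against. What you have written is a reduction, not a proof, and the reduction stops exactly where the real difficulty begins. Your induction step correctly uses~\eqref{The general recursion} to replace each branch $T_i$ by $E^d_{n_i}$ (note in passing that this already requires the conjecture for the \emph{branches} of $E^d_k$, i.e.\ for the smaller even trees $E^d_s$ and $E^d_{s+1}$, so you need a double induction on $(k,n)$ or a statement quantified over all even trees, not just induction on $n$ for fixed $k$). But the resulting claim, that
$$f(n_1,\ldots,n_d)=\sum_{i=1}^d c(E^d_k, E^d_{n_i})+\sum_{\pi \in M(E^d_k)}\prod_{j=1}^d c\bigl((E^d_k)_{\pi(j)},E^d_{n_j}\bigr)$$
is maximized at the balanced composition, is essentially the conjecture itself restated at the level of root compositions, and your plan for it — a balancing exchange justified by a log-concavity/supermodularity property of $m\mapsto c(E^d_s,E^d_m)$, to be extracted from a hoped-for closed form or digit recursion — is asserted, not established. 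No such closed form or concavity statement is known, and the exchange is delicate: moving a leaf from a large branch to a small one can \emph{decrease} the within-branch term $\sum_i c(E^d_k,E^d_{n_i})$ while increasing the cross-product term (or vice versa), so you cannot treat the two sums separately, and the counts $c(E^d_s,E^d_m)$ are not smooth in $m$ — they depend on the base-$d$ expansion of $m$, which is precisely what makes a monotone exchange argument hard to set up. Until that exchange inequality (or some substitute) is actually proved, the proposal does not settle the statement; it repackages the open problem.
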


A complete $d$-ary tree is a strictly $d$-ary tree in which all leaves are at the same distance from the root; the complete $d$-ary tree with $d^h$ leaves (whose distance from the root is $h$) is denoted by $C_h^d$. It is not difficult to see that complete $d$-ary trees are precisely the even trees of the form $E_{d^h}^d$, see for instance $E_9^3$ in Figure~\ref{Someeventernarytrees}. As a corollary of Theorem~\ref{IndEvendAryTree}, we obtain an explicit formula for the inducibility of a complete $d$-ary tree.
\begin{corollary}\label{ind of CDh in d ary trees}
For the complete $d$-ary tree of height $h$, we have
$$I_d\big(C^d_h\big)=(d^h)! \prod_{i=0}^{h-1}\Big(d^{d^{h-i}}-d \Big)^{-d^i}\,.$$
\end{corollary}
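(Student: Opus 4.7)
The plan is to apply Theorem~\ref{IndEvendAryTree} directly. As noted in the paragraph preceding the corollary, the complete $d$-ary tree $C_h^d$ coincides with the even tree $E_{d^h}^d$; hence $I_d(C_h^d) = (d^h)!\, c_{d^h}$, and the task reduces to finding a closed form for $c_{d^h}$.

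First I would specialise the recursion defining $c_k$ to the case $k = d^h$. Writing $d^h = d \cdot d^{h-1} + 0$ gives $s = d^{h-1}$ and $b = 0$, so $\binom{d}{b} = 1$ and the factor $c_{s+1}^{b}$ disappears. The recursion therefore collapses to the single-term relation
$$c_{d^h} = \frac{c_{d^{h-1}}^{\,d}}{d^{d^h}-d},$$
with base case $c_{d^0} = c_1 = 1$.

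A routine induction on $h$ then gives
$$c_{d^h} = \prod_{i=0}^{h-1}\bigl(d^{d^{h-i}}-d\bigr)^{-d^i}.$$
Indeed, raising the inductive hypothesis for $h-1$ to the $d$-th power sends each exponent $-d^i$ to $-d^{i+1}$ and replaces $h-1$ by $h$ in the inner index $h-1-i$; after reindexing $j=i+1$, this yields $\prod_{j=1}^{h-1}(d^{d^{h-j}}-d)^{-d^j}$. Dividing by $d^{d^h}-d$ supplies exactly the missing $j=0$ factor, giving the full product from $j=0$ to $h-1$. Multiplying through by $(d^h)!$ produces the claimed expression.

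There is no real obstacle here beyond index bookkeeping; the recursion is effectively telescoping once $b=0$ eliminates the second branch type. The only point to verify carefully is that at $b=0$ the factor $c_{s+1}^{b}$ is correctly interpreted as $1$ (which it is, under the standard convention that an empty product equals $1$), so that the recursion really has the single-term form used above.
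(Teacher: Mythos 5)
Your proposal is correct and follows the same route as the paper: both identify $C_h^d$ with $E_{d^h}^d$, specialise the recursion of Theorem~\ref{IndEvendAryTree} to $k=d^h$ (where $b=0$ collapses it to $c_{d^h}=c_{d^{h-1}}^d/(d^{d^h}-d)$), and conclude by induction on $h$. Your write-up merely makes explicit the reindexing step that the paper leaves as ``follows easily by induction.''
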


\begin{proof}
Note that $I_d(C^d_h) = (d^h)! c_{d^h}$, and the recursion
$$c_{d^h} = \frac{1}{d^{d^h}-d} c_{d^{h-1}}^d$$
holds. The stated formula follows easily by induction.
\end{proof}

Balanced trees are by far not the only trees that satisfy the supremum condition of Theorem~\ref{thm:supcond}. For example, one can show that all binary trees where one branch has $\ell \geq 2$ leaves and the other $\ell+2$ leaves satisfy it. This implies, among other instances, that the binary tree $T_1$ in Figure~\ref{fig:another_example} has inducibility $I_2(T_1) = \eta_2(T_1) = \frac{45}{217}$. The tree $T_2$ in the same figure can also be shown to satisfy the conditions of Theorem~\ref{thm:supcond} (even though it is not balanced), and one obtains $I_3(T_2) = \eta_3(T_2) = \frac{15}{121}$.

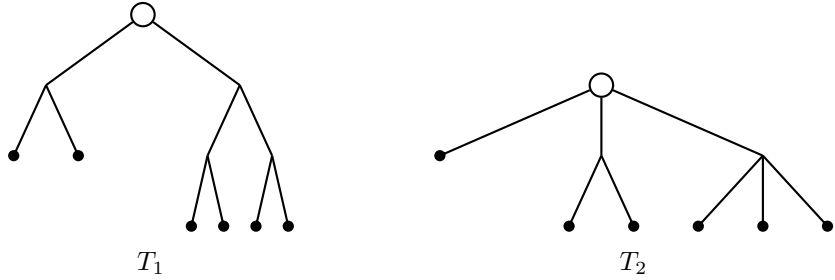
\begin{figure}[htbp]\centering 
\begin{subfigure}[b]{0.4\textwidth} \centering 
\begin{tikzpicture}[thick,level distance=11mm, scale=0.85, transform shape]
\tikzstyle{level 1}=[sibling distance=30mm]   
\tikzstyle{level 2}=[sibling distance=10mm]
\tikzstyle{level 3}=[sibling distance=5mm]
\node [circle,draw]{}
child {child {[fill] circle (2pt)}child {[fill] circle (2pt)}}
child {child {child {[fill] circle (2pt)}child {[fill] circle (2pt)}}child {child {[fill] circle (2pt)}child {[fill] circle (2pt)}}};
\end{tikzpicture}\quad
\caption{$T_1$}
\end{subfigure}
\begin{subfigure}[b]{0.4\textwidth} \centering 
\begin{tikzpicture}[thick,level distance=11mm, scale=0.85, transform shape]
\tikzstyle{level 1}=[sibling distance=25mm]   
\tikzstyle{level 2}=[sibling distance=10mm]
\node [circle,draw]{}
child {[fill] circle (2pt)}
child {child {[fill] circle (2pt)}child {[fill] circle (2pt)}}
child {child {[fill] circle (2pt)}child {[fill] circle (2pt)}child {[fill] circle (2pt)}};
\end{tikzpicture}
\caption{$T_2$}
\end{subfigure}
\caption{Further examples satisfying the conditions of Theorem~\ref{thm:supcond}.}\label{fig:another_example}
\end{figure}

Unfortunately, it does not seem easy to characterise the cases when the supremum condition of Theorem~\ref{thm:supcond} is satisfied. An answer to this open question would be extremely useful.

\medskip

We conclude this section with a result on the speed of convergence of the maximum density over strictly $d$-ary trees to the inducibility.

\begin{theorem}\label{thm:speed}
If $I_d(D) = \eta_d(D)$, then we have
\begin{equation}\label{eq:strict_speed}
\max_{\substack{\|T\| = (d-1)n+1 \\ T \text{ strictly } d\text{-ary tree}}} \gamma(D,T) = I_d(D) + \mathcal{O}(n^{-1}).
\end{equation}
\end{theorem}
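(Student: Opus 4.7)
The plan is to observe that Theorem~\ref{thm:speed} falls out almost immediately by sandwiching the maximum density between two bounds already in hand, so the task reduces to combining ingredients rather than developing new machinery. The main point is that the hypothesis $I_d(D) = \eta_d(D)$ converts the quantitative statement of Theorem~\ref{Completeproportion} into a lower bound on the maximum density, while~\eqref{eq:arb_asymp} supplies a matching upper bound.

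For the upper bound, I would simply invoke~\eqref{eq:arb_asymp}: since any strictly $d$-ary tree with $(d-1)n+1$ leaves is in particular a $d$-ary tree on $N=(d-1)n+1$ leaves, we have
$$\max_{\substack{\|T\| = (d-1)n+1 \\ T \text{ strictly } d\text{-ary}}} \gamma(D,T) \leq \max_{\substack{\|T\| = N \\ T\ d\text{-ary tree}}} \gamma(D,T) \leq I_d(D) + \frac{\|D\|(\|D\|-1)}{(d-1)n+1} = I_d(D) + \mathcal{O}(n^{-1})\,.$$
For the lower bound, I would exhibit the specific strictly $d$-ary tree $H_n^d$ from Section~2, which has exactly $(d-1)n+1$ leaves. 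By Theorem~\ref{Completeproportion},
$$\gamma(D, H_n^d) = \eta_d(D) + \mathcal{O}(n^{-1})\,,$$
and by assumption $\eta_d(D) = I_d(D)$, so $\gamma(D, H_n^d) \geq I_d(D) - \mathcal{O}(n^{-1})$. Therefore the maximum over all strictly $d$-ary trees of this size is also at least $I_d(D) - \mathcal{O}(n^{-1})$.

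Combining the two inequalities yields~\eqref{eq:strict_speed}. There is no real obstacle to overcome: the substantive content has already been done in Theorem~\ref{Completeproportion}, which gives the $\mathcal{O}(n^{-1})$ speed of convergence along the distinguished sequence $H_n^d$, and the hypothesis $I_d(D) = \eta_d(D)$ is exactly what is needed to convert this into a statement about $I_d(D)$ rather than $\eta_d(D)$. The mild point to check is only that the error terms expressed in terms of the parameter $n$ indexing $H_n^d$ and in terms of the leaf count $N = (d-1)n+1$ are of the same order, which they clearly are.
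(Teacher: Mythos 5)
Your proposal is correct and matches the paper's own argument exactly: the upper bound comes from~\eqref{eq:arb_asymp} applied at leaf count $(d-1)n+1$, and the lower bound from exhibiting $H_n^d$ together with Theorem~\ref{Completeproportion} and the hypothesis $I_d(D)=\eta_d(D)$. Nothing further is needed.
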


\begin{proof}
The lower bound is a consequence of Theorem~\ref{Completeproportion}, since
$$\max_{\substack{\|T\| = (d-1)n+1 \\ T \text{ strictly } d\text{-ary tree}}} \gamma(D,T) \geq \gamma(D,H_n^d) = \eta_d(D) + O(n^{-1}).$$
The upper bound follows directly from~\eqref{eq:arb_asymp}:
$$\max_{\substack{\|T\| = (d-1)n+1 \\ T \text{ strictly } d\text{-ary tree}}} \gamma(D,T)  \leq \max_{\substack{\|T\| = (d-1)n+1 \\ T\ d\text{-ary tree}}} \gamma(D,T) \leq I_d(D) + \frac{\|D\|(\|D\|-1)}{(d-1)n+1}\,.$$
\end{proof}

As mentioned in the introduction, this result gives support to the conjecture that~\eqref{eq:strict_speed} holds for arbitrary trees $D$ (in general, it has only been proven with an error term $O(n^{-1/2})$). As we have seen in this section, there are many examples for which the condition $I_d(T) = \eta_d(T)$ holds, such as all even trees.

\section{Further bounds}

Even when Theorem~\ref{upperonBalnce} does not yield the precise value of the inducibility, it often gives us very good bounds. In the case where $D$ has $d$ identical branches, the following theorem shows that it is at least ``almost sharp''.

\begin{theorem}\label{thm:equalbranches}
Let $d\geq 2$ be an arbitrary but fixed positive integer and $D$ a $d$-ary tree. Assume that $D$ has $d$ branches all of which are isomorphic to the same $d$-ary tree, say $D^{\prime}$. Then we have
$$ \frac{\|D\|!}{d^{\|D\|}}  \Bigg(\frac{I_d(D^{\prime})}{\|D^{\prime}\|!}\Bigg)^d \leq I_d(D)\leq \frac{\|D\|!}{d^{\|D\|}-d} \Bigg(\frac{I_d(D^{\prime})}{\|D^{\prime}\|!}\Bigg)^d \,.$$
\end{theorem}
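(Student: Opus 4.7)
The plan is to prove the two bounds separately via largely independent arguments. For the upper bound, I would observe that a tree with $d$ identical branches is (trivially) balanced in the sense of Section~\ref{Sec:balance}, so Theorem~\ref{upperonBalnce} applies directly. Since every branch of $D$ equals $D'$, every permutation $\pi \in S_d$ sends the branches of $D$ to an isomorphic arrangement, so the equivalence relation $\sim_D$ has only one equivalence class, i.e., $|M(D)| = 1$. Writing $k = \|D\|$ and $\ell = \|D'\|$ (so $k = d\ell$), the multinomial coefficient collapses to $\binom{k}{\ell,\ldots,\ell} = k!/(\ell!)^d$, and Theorem~\ref{upperonBalnce} immediately delivers
$$I_d(D) \;\leq\; \frac{1}{d^{k}-d}\cdot\frac{k!}{(\ell!)^d}\cdot I_d(D')^d,$$
which is exactly the claimed upper bound.

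For the lower bound, I would exhibit an explicit family of $d$-ary trees whose density of $D$ approaches the claimed expression. Using the limit characterisation of $I_d(D')$ in~\eqref{eq:limit}, for each $n$ pick a $d$-ary tree $T_n^{*}$ with $n$ leaves maximising $\gamma(D',\cdot)$, so that $\gamma(D', T_n^{*}) \to I_d(D')$. Let $S_n$ be the tree obtained by joining $d$ disjoint copies of $T_n^{*}$ under a common new root, so that $\|S_n\|=dn$. Applying the branch recursion~\eqref{The general recursion} to $S_n$ and discarding the (nonnegative) contribution from copies of $D$ lying inside a single branch, and using $|M(D)|=1$, I get
$$c(D,S_n) \;\geq\; \sum_{\pi \in M(D)} \prod_{j=1}^d c\bigl(D_{\pi(j)}, T_n^{*}\bigr) \;=\; c(D', T_n^{*})^d.$$

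The remaining step is a routine asymptotic estimate. Writing $c(D', T_n^{*}) = \gamma(D', T_n^{*})\binom{n}{\ell} \sim I_d(D')\,n^{\ell}/\ell!$ and $\binom{dn}{k} \sim d^{k} n^{k}/k!$, one obtains
$$\gamma(D, S_n) \;\geq\; \frac{c(D', T_n^{*})^d}{\binom{dn}{k}} \;\longrightarrow\; \frac{k!}{d^{k}}\left(\frac{I_d(D')}{\ell!}\right)^d,$$
and the lower bound then follows from $I_d(D) \geq \limsup_{n\to\infty} \gamma(D, S_n)$. I do not foresee any serious obstacle: both bounds reduce to swift applications of results already established in the paper, and the only calculation is the elementary asymptotic above. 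The one point deserving mild care is to invoke the genuine limit in~\eqref{eq:limit} (rather than a bare $\limsup$) in order to be sure that suitable near-extremal trees $T_n^{*}$ actually exist for every sufficiently large~$n$.
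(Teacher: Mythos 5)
Your argument is correct. The upper bound is handled exactly as in the paper: $D$ with $d$ isomorphic branches is balanced, $|M(D)|=1$ (since all branches being isomorphic forces a single equivalence class of $\sim_D$), and Theorem~\ref{upperonBalnce} gives $I_d(D)\leq \frac{1}{d^{k}-d}\binom{k}{\ell,\ldots,\ell}I_d(D')^d$ with $k=\|D\|$, $\ell=\|D'\|$, which is the stated bound. For the lower bound you diverge from the paper's proof, which simply cites Theorem~9 of \cite{AudaceStephanPaper1}; instead you re-derive it by joining $d$ copies of near-extremal trees $T_n^{*}$ for $D'$ under a new root and estimating $\gamma(D,S_n)\geq c(D',T_n^{*})^d/\binom{dn}{k}$. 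This is essentially the proof of Theorem~\ref{FurAdd} later in the paper, specialised to identical branches (and indeed the paper remarks that the lower bound of Theorem~\ref{thm:equalbranches} is a special case of Theorem~\ref{FurAdd}: with all $\|D_i\|=\ell$ one has $k^{-k}\prod_i \ell^{\ell}=\ell^{k}/k^{k}=d^{-k}$, matching your constant). Your asymptotics $c(D',T_n^{*})\sim I_d(D')n^{\ell}/\ell!$ and $\binom{dn}{k}\sim d^{k}n^{k}/k!$ are fine, the existence of the maximisers $T_n^{*}$ with $\gamma(D',T_n^{*})\to I_d(D')$ is indeed guaranteed by the limit form~\eqref{eq:limit} as you note, and $I_d(D)\geq\limsup_n\gamma(D,S_n)$ is valid since $\|S_n\|\to\infty$. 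So your write-up buys self-containedness (no appeal to the external Theorem~9, and no need for the general Theorem~\ref{FurAdd}), at the cost of redoing a construction the paper already has in general form; the paper's version is shorter but relies on results proved elsewhere.
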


\begin{proof}
The lower bound is a special case of \cite[Theorem 9]{AudaceStephanPaper1}, while the upper bound is a direct consequence of Theorem~\ref{upperonBalnce}.
\end{proof}

We conclude with a lower bound on the inducibility of a tree in terms of the inducibilities of its branches, of which the lower bound in the previous theorem is also a special case.

\begin{theorem}\label{FurAdd}
Let $D$ be a $d$-ary tree with branches $D_1,D_2,\ldots,D_d$ (some of which may be empty). The following inequality holds:
$$I_d(D) \geq \binom{\|D\|}{\|D_1\|,\|D_2\|,\ldots,\|D_d\|} \|D\|^{-\|D\|} \prod_{i=1}^d \|D_i\|^{\|D_i\|} \prod_{i=1}^d I_d(D_i)\,.$$
\end{theorem}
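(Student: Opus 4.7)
The plan is to construct an explicit sequence of $d$-ary trees $T^{(n)}$ whose density of $D$ approaches the claimed lower bound, so that the existence of the limit in~\eqref{eq:limit} then gives the inequality. The guiding intuition is that the $n$ leaves of $T^{(n)}$ should be distributed among the branches of its root in proportion to the branch sizes of $D$: the elementary optimization of $\prod_j x_j^{\|D_j\|}$ subject to $x_j \geq 0$ and $\sum_j x_j = 1$ is achieved at $x_j = \|D_j\|/\|D\|$, which produces exactly the factor $\|D\|^{-\|D\|}\prod_j\|D_j\|^{\|D_j\|}$ appearing in the theorem.

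First, invoke~\eqref{eq:limit} to select, for each nonempty branch $D_i$, a sequence $S_i^{(N)}$ of $d$-ary trees with $\|S_i^{(N)}\| = N$ and $\gamma(D_i, S_i^{(N)}) \to I_d(D_i)$ as $N \to \infty$. For each large $n$, set $N_i = \lfloor \|D_i\| n/\|D\| \rfloor$ for nonempty $D_i$ (with $O(1)$ adjustments so that $\sum_i N_i = n$) and $N_i = 0$ otherwise, then build $T^{(n)}$ by joining the nonempty $S_i^{(N_i)}$ to a common new root. Applying~\eqref{The general recursion} with $T = T^{(n)}$ and retaining just one term of the second sum (corresponding to the identity matching of $D_j$ with $S_j^{(N_j)}$, which we may assume represents a class of $M(D)$) gives
$$c(D, T^{(n)}) \geq \prod_{j=1}^{d} c\bigl(D_j, S_j^{(N_j)}\bigr) = \prod_{j=1}^{d} \gamma\bigl(D_j, S_j^{(N_j)}\bigr)\binom{N_j}{\|D_j\|},$$
with empty-branch factors interpreted as $1$.

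As $n\to\infty$, the estimate $N_j = (\|D_j\|/\|D\|)\,n + O(1)$ yields $\binom{N_j}{\|D_j\|} \sim (\|D_j\|/\|D\|)^{\|D_j\|} n^{\|D_j\|}/\|D_j\|!$, so the product on the right is asymptotically $\frac{n^{\|D\|}}{\prod_j \|D_j\|!}\prod_j (\|D_j\|/\|D\|)^{\|D_j\|}\prod_j I_d(D_j)$; dividing by $\binom{n}{\|D\|} \sim n^{\|D\|}/\|D\|!$ and using $\sum_j\|D_j\|=\|D\|$ to rewrite the monomial factor as $\|D\|^{-\|D\|}\prod_j\|D_j\|^{\|D_j\|}$ gives exactly the stated bound in the limit. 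The only substantive obstacle is bookkeeping: verifying that the $O(1)$ rounding corrections do not affect the leading-order asymptotics, and that the conventions $0^0 = 1$ and $I_d(\emptyset) = 1$ make empty-branch contributions drop out consistently from each product. No further analytic input is needed, since the ``correct'' allocation of leaves to branches is already encoded in the bound we are trying to prove.
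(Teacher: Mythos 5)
Your proposal is correct and takes essentially the same route as the paper: both arguments join near-extremal trees for each branch, with leaf counts proportional to the branch sizes $\|D_i\|/\|D\|$, under a new common root, bound $c(D,T)$ from below by the product $\prod_i c(D_i,\cdot)$ (the paper states this directly and remarks it also follows from~\eqref{The general recursion}, the single-term retention you use), and then pass to the limit of densities. The only cosmetic difference is that the paper gives the $i$-th branch exactly $\|D_i\|\,n$ leaves, so the total is $\|D\|n$ and your floor-rounding bookkeeping is avoided entirely.
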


\begin{proof}
Let us write $k = \|D\|$ and $\ell_i = \|D_i\|$ as in previous proofs.
For each $D_i$, we can find a sequence of rooted trees $T_n^{(i)}$ such that $\|T_n^{(i)}\| = n$ and $\lim_{n \to \infty} \gamma(D_i,T_n^{(i)}) = I_d(D_i)$. Now define a new sequence of trees $T_n$ as follows:
\begin{itemize}
\item For each $i \in [d]$, take a copy of the tree $T_{\ell_i n}^{(i)}$, which has $\ell_i n$ leaves (if $\ell_i = 0$, this is the empty tree).
\item Add a new root, which is connected to the roots of all these trees by an edge.
\end{itemize}
Note that the tree $T_n$ has $\sum_{i=1}^d (\ell_i n) = kn$ leaves. If we take a leaf set in the $i$-th branch that induces a copy of $D_i$ for each $i$, then the union of all these leaf sets induces a copy of $D$. Therefore, we have
$$c(D,T_n) \geq \prod_{i=1}^d c(D_i,T_{\ell_i n}^{(i)})\,.$$
This also follows easily from~\eqref{The general recursion}. Now note that $c(D_i,T_{\ell_i n}^{(i)}) = \binom{\ell_i n}{\ell_i} \gamma(D_i,T_{\ell_i n}^{(i)})$ and $c(D,T_n) = \binom{kn}{k} \gamma(D,T_n)$. It follows that
$$\gamma(D,T_n) \geq \frac{\prod_{i=1}^d \binom{\ell_i n}{\ell_i}}{\binom{k n}{k}} \prod_{i=1}^d \gamma(D_i,T_{\ell_i n}^{(i)})\,.$$
As $n \to \infty$, the right side of this inequality tends to
$$\frac{\prod_{i=1}^d \frac{\ell_i^{\ell_i}}{\ell_i!}}{\frac{k^k}{k!}} \prod_{i=1}^d I_d(D_i) = \binom{k}{\ell_1,\ell_2,\ldots,\ell_d} k^{-k} \prod_{i=1}^d \ell_i^{\ell_i} \prod_{i=1}^d I_d(D_i)\,,$$
so it follows that
$$I_d(D) \geq \limsup_{n \to \infty} \gamma(D,T_n) \geq \binom{k}{\ell_1,\ell_2,\ldots,\ell_d} k^{-k} \prod_{i=1}^d \ell_i^{\ell_i} \prod_{i=1}^d I_d(D_i)\,,$$
which completes the proof.
\end{proof}

Let us illustrate the results of this section with two final examples, which are shown in Figure~\ref{fig:final_examples}. For the ternary tree $T_1$ on the left, Theorem~\ref{thm:equalbranches} yields $0.08535 \approx \frac{560}{6561} \leq I_3(T_1) \leq \frac{7}{82} \approx 0.08537$, giving us an excellent approximation. The lower bound provided by Theorem~\ref{Completeproportion} is much weaker in this case, as $\eta_3(T_1) = \frac{189}{5248} \approx 0.03601$. For the binary tree $T_2$ on the right, Theorem~\ref{FurAdd} yields $I_2(T_2) \geq \frac{80}{243} \approx 0.32922$, which is stronger than the lower bound $\eta_2(T_2) = \frac{60}{217} \approx 0.27650$. An upper bound can be obtained from Proposition~\ref{ind less prod ind}, which gives us $I_2(T_2) \leq \frac{15}{31} \approx 0.48387$. We do not know the precise value of the inducibility in either of the two cases, though.

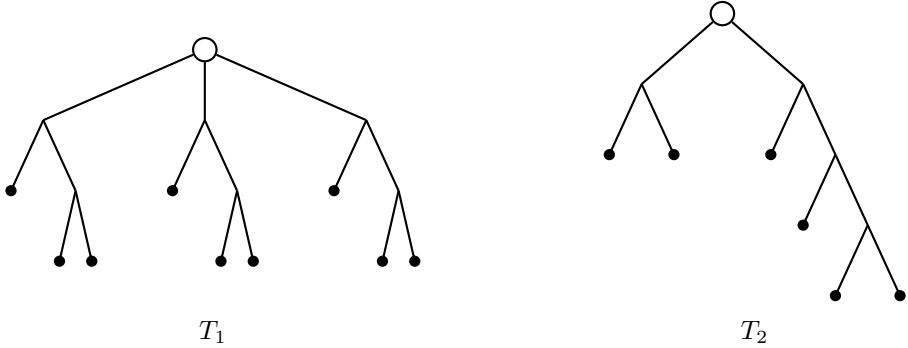
\begin{figure}[htbp]\centering 
\begin{subfigure}[b]{0.4\textwidth} \centering 
\begin{tikzpicture}[thick,level distance=11mm, scale=0.85, transform shape]
\tikzstyle{level 1}=[sibling distance=25mm]   
\tikzstyle{level 2}=[sibling distance=10mm]
\tikzstyle{level 3}=[sibling distance=5mm]
\node [circle,draw]{}
child {child {[fill] circle (2pt)}child {child {[fill] circle (2pt)}child {[fill] circle (2pt)}}}
child {child {[fill] circle (2pt)}child {child {[fill] circle (2pt)}child {[fill] circle (2pt)}}}
child {child {[fill] circle (2pt)}child {child {[fill] circle (2pt)}child {[fill] circle (2pt)}}};
\end{tikzpicture}\qquad \qquad \qquad
\caption{$T_1$}
\end{subfigure}
\begin{subfigure}[b]{0.5\textwidth} \centering 
\begin{tikzpicture}[thick,level distance=11mm, scale=0.85, transform shape]
\tikzstyle{level 1}=[sibling distance=25mm]   
\tikzstyle{level 2}=[sibling distance=10mm]
\tikzstyle{level 3}=[sibling distance=10mm]
\tikzstyle{level 4}=[sibling distance=10mm]
\node [circle,draw]{}
child {child {[fill] circle (2pt)}child {[fill] circle (2pt)}}
child {child {[fill] circle (2pt)}child {child{[fill] circle (2pt)}child{child {[fill] circle (2pt)}child {[fill] circle (2pt)}}}};
\end{tikzpicture}
\caption{$T_2$}
\end{subfigure}
\caption{Two final examples.}\label{fig:final_examples}
\end{figure}

\section*{Acknowledgment}

We would like to thank an anonymous referee for valuable comments which helped us to improve this paper.

\end{document}